\definecolor{purple}{rgb}{0.65, 0, 1}
\definecolor{orange}{rgb}{1,.5,0}
\definecolor{brown}{rgb}{.9,.73,.26}
\newtheorem{theorem}{Theorem}[section]
\newtheorem{remark}{Remark}[section]
\newtheorem{proposition}[theorem]{Proposition}
\newtheorem{definition}{Definition}[section]
\newtheorem{lemma}[theorem]{Lemma}
\numberwithin{equation}{section}
\def\R{{\mathbb {R}}}
\newcommand{\abs}[1]{\lvert#1\rvert}
\def\R{\mathbb{R}}
\title[non-existence  to the Grushin heat equation]{Nonexistence of global solutions to the Grushin heat equation with nonlocal and local nonlinearities}
\author[A. Fino]{Ahmad Z. Fino$^{*,\lowercase{a}}$}
\address{$^{*,a}$College of Engineering and Technology, American University of the Middle East, Kuwait}
\email{ahmad.fino@aum.edu.kw}
\author[A. Viana]{Arlúcio Viana$^{\lowercase{b,c}}$}
\address{$^{b}$Department of Mathematics, Universidade Federal de Sergipe, Brazil}
\address{$^{c}$Institut f\"ur Angewandte Analysis, Universit\"at Ulm, Germany.}
\email{arlucioviana@academico.ufs.br}
\begin{document}
	
\subjclass[2020]{Primary: 35A01, 35K57. Secondary: 35D30, 35K58.}




\keywords{Grushin's operator, nonexistence of global solutions, weak solutions, Fujita critical exponent, nonlinear memory, nonlinear reaction}

\begin{abstract}
In this paper, we investigate the nonexistence of global solutions to the Grushin-type heat equation with nonlinear reaction terms, including cases involving memory effects:
	\begin{equation*}
		\left\{\begin{array}{ll}
			\displaystyle {u_{t}-\Delta_{\mathcal{G}}
				u = k_1 \int_0^t(t-s)^{-\gamma}\abs{u}^{p_1-1}u(s)\,\mathrm{d}s} + k_2|u|^{p_2-1}u, &  (z,t)\in {\mathbb{R}}^{N+k}\times (0,\infty),\\
			\displaystyle{u(z,0)=  u_0(z),\qquad\qquad}&\displaystyle{z\in {\mathbb{R}}^{N+k},}
		\end{array}
		\right.
	\end{equation*}
	where $\Delta_{\mathcal{G}}$ denotes the Grushin operator, $u_0 \in L^1_{\mathrm{loc}}(\mathbb{R}^{N+k})$, $\gamma\in[0,1)$, $k_1,k_2 \geq 0$, and $p_1,p_2>1$.
	We establish sharp nonexistence results for global-in-time positive solutions, thereby completing the picture of global existence versus blow-up and allow us to identify the corresponding Fujita-type critical exponents in certain parameter regimes.
The analysis relies on the test function method, adapted to handle both the degeneracy of the Grushin operator and the influence of the memory term.

\end{abstract}

\date{\today}

\maketitle

\section{Introduction}

In this paper, we address the non-existence of global solutions for the Grushin heat equation with mixed nonlinear memory and reaction terms:
\begin{equation}\label{*}
	\left\{\begin{array}{ll}
		\displaystyle {u_{t}-\Delta_{\mathcal{G}}
			u = k_1 \int_0^t(t-s)^{-\gamma}\abs{u}^{p_1-1}u(s)\,\mathrm{d}s} + k_2|u|^{p_2-1}u, &  (z,t)\in {\mathbb{R}}^{N+k}\times (0,\infty),\\
		\displaystyle{u(z,0)=  u_0(z),\qquad\qquad}&\displaystyle{z\in {\mathbb{R}}^{N+k},}
	\end{array}
	\right.
\end{equation}
where $u_0 \in L^1_{\mathrm{loc}}(\mathbb{R}^{N+k})$, $\gamma\in[0,1)$, $k_1,k_2\geq 0$, and $p_1,p_2>1$. The  Grushin operator is defined by
\begin{equation*}\label{Grushiop}
	\Delta_{\mathcal{G}}= \Delta_{x}+|x|^2\Delta_{y}, 
\end{equation*}
where $\Delta_x, \Delta_y$ denote the classical Laplacian in the variables $x\in\R^N$ and $y\in\R^k$, respectively.

Diffusion equations with nonlinear memory reaction, as well as the combination of with and without memory reactions, are well-established subjects in the literature, see e.g. \cite{CDW,Be-87, deAn-Vi-17, Webb-78, Yam-88, Yam-84} and references therein. Existence of solutions and other qualitative properties for the heat equation mixed pure and memory nonlinearities are also discussed in several frameworks, see \cite{Ca-Vi-15, deAn-Vi-17, Gladkov-24, Soup-96, Zh-23}. Furthermore, mixed power-type nonlinearities have been largely studied (see e.g. \cite{Br-Ni-83}, references therein and thereof). For evolution problems, we refer to \cite{Alves-Bou-24, Fino-C-20, FVR-11, Loayza-06, Sli-17}. Those works are dedicated to well-posedness, global existence of positive solutions, and finite-time blow-up of the sign-changing. In fact,  nonlocal nonlinearities play an important role in the analysis of the evolution equations, for example, dissociating the critical Fujita exponent from the one given by scaling computations, which was for the first time observed by Cazenave, Dickstein and Weissler \cite{CDW}. They proved that the critical exponent for 
$$ u_t-\Delta u=\int_0^t(t-s)^{-\gamma}|u|^{p-1} u(s) d s \quad x \in \mathbb{R}^N, t>0 ,$$
where $0 \leq \gamma<1$, $p>1$, is $p_*=\max \left\{\frac{1}{\gamma}, p_\gamma\right\}$, where
\begin{equation}\label{FC-CDW}
	p_\gamma=1+\frac{2(2-\gamma)}{(N-2+2 \gamma)_{+}} ,	
\end{equation}
while the exponent predicted by the scaling is $p_{\mathrm{sc}}=1+\frac{4-2 \gamma}{N}$. Subsequently, Fino and Kirane generalized that result  for systems in \cite{Fino-Ki-11}, and by replacing the Laplacian with the fractional Laplacian in \cite{Fino-Ki-12}. Then this type of investigation attracted more researchers. See e.g. \cite{Cas-Loy-19,D-Ki-Fino-18,Ki-La-T-05,Loy-P-14}. 

On the other hand, the hypoelliptic operator introduced by Baouendi and Grushin \eqref{Grushiop} \cite{Baouendi-67, Grushin} has attracted the interest of many researchers in evolution equations in recent years, cf. \cite{Alves-G-Lo-24, A-14, Calin-11, Chang-Li-12,Gaveau-77,Goldstein-Kogoj, Kogoj-Lan-12,Wu-15}. Recently, there has been an increasing interest in nonlinear parabolic equations with sub-elliptic operators, see e.g. the recent papers \cite{A-14, Bu-Lat-25, Fino-Ruz-Tor-24, Hi-Oka-24, J-T-14, J-Ki-S-16,RuzY-22}. At the core of some analysis of heat equations with the Grushin operator is the study of the heat kernel associated with the Grushin operator, see e.g. \cite{Garofalo-Trallli-22, Oli-Vi-23, Stempak-25}.

When $k_1=0$ in \eqref{*}, Oliveira and Viana \cite{Oli-Vi-23} proved the existence, uniqueness, continuous dependence, and blowup alternative of local mild solutions,
with initial conditions taken in Lebesgue spaces. Through their analysis, they also obtained the existence of global solutions in the special case of $u_0\in L^{\frac{ N + 2k}{2}(\rho-1)}(\R^{N+k})$ with a sufficiently small norm. Later on, Kogoj, Lima and Viana \cite{Kogoj-Lima-Viana-24} by working in Marcinkiewicz  spaces, they allowed larger (in Lebesgue norm) initial data to be taken into account in order to obtain global solutions. More precisely, they gave sufficient conditions for the existence and uniqueness of mild solutions for \eqref{*} ($k_1=0)$, with initial conditions in the critical Marcinkiewicz  space $L^{\frac{N+2k}{2}(\rho-1) ,\infty}(\R^{N+k})$. Moreover, they obtained the existence of positive, symmetric, and self-similar solutions. The number $N+2k$ is the so-called homogeneous dimension attached to Grushin's operator (see \cite{Kogoj-Lan-18}). More recently, the authors proved the existence of global nonnegative solutions, cf. \cite{Fino-Viana-25.1}. Their approach required well-posedness and comparison results, which were also developed in the same paper.

Non-existence results for heat equations with combined pure and memory reaction have been studied by Souplet \cite{Soup-96}, in the case of the Laplace operator, where the author considered the equation
\begin{equation}\label{Souplet}
	u_t-\Delta u=\mu(x) \int_0^t u^p(s, x) d s-a u^q(t, x) \quad t>0, \quad x \in \Omega,
\end{equation}
where $p, q \geq 1, a>0, \mu$ is Hölder continuous in $\bar{\Omega}, \mu \geq 0, \mu \not \equiv 0$. Under suitable conditions, he proved that blow up occurs whenever $p>q$ and for all nonnegative nontrivial $u_0$., and $p=q$ is proved to be the critical blow up exponent. Recently, Zhang \cite{Zh-23} generalized the results by replacing in \eqref{Souplet} the usual time-derivative with the Caputo fractional one and considering a singular kernel in the memory reaction term. However, when addressing heat equations involving non-classical operators as the Grushin operator, such results appear to be lacking in the literature.

We analyze the non-existence of nonnegative solutions for \eqref{*} in different cases. Combined with the results in \cite{Fino-Viana-25.1}, our results provide the critical Fujita exponent \cite{Fujita} in some specific cases. More precisely,
\begin{enumerate}
	\item For $k_1=0$, the critical Fujita exponent for \eqref{*} is given by
	$$
	p_{c_1} := 1+\frac{2}{N+2k} .
	$$
	See Theorem \ref{T1} below. We observe that this is compatible with the classical \textit{Euclidean case}, since $N+2k$ is the homogeneous dimension in the space induced by the Grushin geometry.

	\item For $k_2=0$, the same phenomenon occurs: the critical Fujita exponent for \eqref{*} is given by
	$$p_{c_2} := \max\left\{ 1+\frac{2(2-\gamma)}{N+2k-2+2\gamma} , \frac{1}{\gamma} \right\},$$
	which is compatible with \eqref{FC-CDW}. See Theorems \ref{T2} and \ref{T3} below.

	\item For $k_1,k_2>0$, the condition for non-existence of nonnegative solutions (see Theorems \ref{T4} and \ref{T5}) lies on
	$$p_1 \leq p_{c_1} \ \ \mbox{or} \ \ p_2 \leq p_{c_2} . $$
	
\end{enumerate} 

The paper is organized as follows. Section \ref{pre} introduces key definitions, terminology, and preliminary results. In Section \ref{nonex}, we state and prove the precise non-existence results for \eqref{*}.

\section{Preliminaries}\label{pre}
In this section, we gather the definitions and results used in our proofs. They are related to tools from the Fractional Calculus and integrodifferential inequalities.

\begin{definition}
	A function $\mathcal{A} :[a,b]\rightarrow\mathbb{R}$, $-\infty<a<b<\infty$, is said to be absolutely continuous if and only if there exists $\psi\in L^1(a,b)$ such that
	\[
	\mathcal{A} (t)=\mathcal{A} (a)+\int_{a}^t \psi(s)\,ds.
	\]
	$AC[a,b]$ denotes the space of these functions.  Moreover,
\[
	AC^2[a,b]:=\left\{\varphi:[a,b]\rightarrow\mathbb{R};\,\varphi'\in AC[a,b]\right\}.
	\]
\end{definition}

The following definitions and properties of fractional operators can be found in \cite[Chapter~1]{Samko}.
\begin{definition}[Riemann-Liouville fractional integral]
	Let $f\in L^1(c,d)$, $-\infty<c<d<\infty$. The Riemann-Liouville left-and right-sided fractional integrals are defined by
	\begin{equation}\label{Int1}
		I^\alpha_{c|t}f(t):=\frac{1}{\Gamma(\alpha)}\int_{c}^t(t-s)^{-(1-\alpha)}f(s)\,ds, \quad t>c, \; \alpha\in(0,1),
	\end{equation}
	and
	\begin{equation}\label{Int2}
		I^\alpha_{t|d}f(t):=\frac{1}{\Gamma(\alpha)}\int_t^{d}(s-t)^{-(1-\alpha)}f(s)\,ds, \quad t<d, \; \alpha\in(0,1)
	\end{equation}
	where $\Gamma$ is the Euler gamma function.
\end{definition}

\begin{definition}[Riemann-Liouville fractional derivative]
	Let $f\in AC[c,d]$, $-\infty<c<d<\infty$. The Riemann-Liouville  left-and right-sided fractional derivatives  are defined by
	\begin{equation}\label{}
		D^\alpha_{c|t}f(t):=\frac{d}{\mathrm{dt}}I^{1-\alpha}_{c|t}f(t)=\frac{1}{\Gamma(1-\alpha)}\frac{d}{\mathrm{dt}}\int_{c}^t(t-s)^{-\alpha}f(s)\,ds, \quad t>c,\quad \alpha\in(0,1),
	\end{equation}
	$$
	D^\alpha_{c|t}f(t):=\frac{1}{\Gamma(2-\alpha)}\frac{d^2}{\mathrm{dt}^2}\int_{c}^t(t-s)^{-(\alpha-1)}f(s)\,ds, \quad t>c,\quad\alpha\in(1,2),
	$$
	\begin{equation}\label{}
		D^\alpha_{t|d}f(t):=-\frac{d}{\mathrm{dt}}I^{1-\alpha}_{t|d}f(t)=-\frac{1}{\Gamma(1-\alpha)}\frac{d}{\mathrm{dt}}\int_t^{d}(s-t)^{-\alpha}f(s)\,ds, \quad t<d,\quad \alpha\in(0,1),
	\end{equation}
	and
	$$
	D^\alpha_{t|d}f(t):=\frac{1}{\Gamma(2-\alpha)}\frac{d^2}{\mathrm{dt}^2}\int_t^{d}(s-t)^{-(\alpha-1)}f(s)\,ds, \quad t<d,\quad\alpha\in(1,2).
	$$
\end{definition}

For the next proposition, see \cite[(2.64), p. 46]{Samko}
\begin{proposition}
	Let $\alpha\in(0,1)$ and $-\infty<c<d<\infty$. The fractional integration by parts formula
	\begin{equation}\label{IP}
		\int_{c}^{d}f(t)D^\alpha_{c|t}g(t)\,\mathrm{dt} \;=\; \int_{c}^{d}
		g(t)D^\alpha_{t|d}f(t)\,\mathrm{dt},
	\end{equation}
	is satisfied for every $f\in I^\alpha_{t|d}(L^p(c,d))$, $g\in I^\alpha_{c|t}(L^q(c,d))$ such that $\frac{1}{p}+\frac{1}{\R^{N+k}}\leq 1+\alpha$, $p,q>1$, where
	\[
	I^\alpha_{c|t}(L^q(0,T)):=\left\{f= I^\alpha_{c|t}h,\,\, h\in L^q(c,d)\right\},
	\]
	and
	\[
	I^\alpha_{t|d}(L^p(c,d)):=\left\{f= I^\alpha_{t|d}h,\,\, h\in L^p(c,d)\right\}.
	\]
\end{proposition}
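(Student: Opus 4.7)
The plan is to unpack the definitions and reduce the identity to a single application of Fubini's theorem on a triangular region.

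First, I would invoke the hypotheses directly: write $f = I^\alpha_{t|d}\phi$ with $\phi \in L^p(c,d)$ and $g = I^\alpha_{c|t}\psi$ with $\psi \in L^q(c,d)$. Appealing to the standard inversion property of Riemann–Liouville operators (namely $D^\alpha_{c|t}I^\alpha_{c|t} = \mathrm{id}$ and $D^\alpha_{t|d}I^\alpha_{t|d} = \mathrm{id}$ on the ranges considered, see \cite{Samko}), this gives $D^\alpha_{t|d}f = \phi$ and $D^\alpha_{c|t}g = \psi$. Substituting into \eqref{IP}, the identity to prove becomes
$$\int_c^d (I^\alpha_{t|d}\phi)(t)\,\psi(t)\,\mathrm{d}t \;=\; \int_c^d (I^\alpha_{c|t}\psi)(t)\,\phi(t)\,\mathrm{d}t.$$

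Next, I would expand both fractional integrals using \eqref{Int1}--\eqref{Int2}. The left-hand side becomes
$$\frac{1}{\Gamma(\alpha)}\int_c^d\!\!\int_t^d (s-t)^{\alpha-1}\phi(s)\psi(t)\,\mathrm{d}s\,\mathrm{d}t,$$
while the right-hand side becomes
$$\frac{1}{\Gamma(\alpha)}\int_c^d\!\!\int_c^t (t-s)^{\alpha-1}\phi(t)\psi(s)\,\mathrm{d}s\,\mathrm{d}t.$$
Both iterated integrals are taken over the same triangle $\{(s,t):c<t<s<d\}$ once dummy variables are relabelled in the second one, so the two expressions are formally equal. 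The entire content of the proposition is thus reduced to the legitimacy of swapping the order of integration for the kernel $K(s,t) = (s-t)^{\alpha-1}\phi(s)\psi(t)\mathbbm{1}_{\{t<s\}}$.

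The main obstacle is therefore the justification of Fubini, i.e., showing that $K$ is absolutely integrable on $(c,d)^2$. The natural tool is the one-dimensional Hardy--Littlewood--Sobolev inequality with kernel $|s-t|^{-(1-\alpha)}$, which gives
$$\int_c^d\!\!\int_c^d \frac{|\phi(s)||\psi(t)|}{|s-t|^{1-\alpha}}\,\mathrm{d}s\,\mathrm{d}t \;\le\; C\,\|\phi\|_{L^p(c,d)}\,\|\psi\|_{L^q(c,d)}$$
whenever $\tfrac{1}{p}+\tfrac{1}{q}=1+\alpha$ and $p,q>1$. Under the assumed inequality $\tfrac{1}{p}+\tfrac{1}{q}\le 1+\alpha$, one first uses the finiteness of $(c,d)$ together with Hölder's inequality to embed $L^{p},L^{q}$ into slightly smaller Lebesgue spaces whose exponents saturate the critical HLS relation, and then applies HLS there. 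Once Fubini is in place, reversing the order of integration in the left-hand iterated integral produces the right-hand one, completing the proof.
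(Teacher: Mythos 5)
Your proof is correct, but note that the paper itself contains no proof of this proposition: it is quoted directly from \cite[(2.64), p.~46]{Samko}, so there is no internal argument to compare against, and what you have written essentially reconstructs the standard textbook proof. Your reduction is sound: writing $f=I^\alpha_{t|d}\phi$, $g=I^\alpha_{c|t}\psi$ and invoking the inversion $D^\alpha I^\alpha=\mathrm{id}$ turns \eqref{IP} into the Dirichlet-type identity $\int_c^d (I^\alpha_{t|d}\phi)(t)\,\psi(t)\,\mathrm{dt}=\int_c^d(I^\alpha_{c|t}\psi)(t)\,\phi(t)\,\mathrm{dt}$, which is exactly Fubini on the triangle $\{c<t<s<d\}$; the one-dimensional Hardy--Littlewood--Sobolev inequality at the critical line $\frac1p+\frac1q=1+\alpha$ (where the hypothesis $p,q>1$ is precisely what HLS requires), combined in the subcritical case with the finite-measure embedding $L^p(c,d)\hookrightarrow L^{\tilde p}(c,d)$ for $\tilde p\le p$, yields the absolute integrability that legitimizes the interchange. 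For the embedding step it is worth verifying explicitly that admissible $\tilde p,\tilde q>1$ with $\frac{1}{\tilde p}+\frac{1}{\tilde q}=1+\alpha$, $\tilde p\le p$, $\tilde q\le q$ always exist; this holds because $\frac1p,\frac1q<1$ and $1+\alpha<2$, so the choice is possible, but it deserves a line in the write-up. Two small points to tidy: (i) the paper's identity \eqref{Int3} is stated only for the left-sided pair, so you should cite or prove the right-sided analogue $D^\alpha_{t|d}I^\alpha_{t|d}\phi=\phi$ that you use (it follows from $I^{1-\alpha}_{t|d}I^\alpha_{t|d}\phi=I^{1}_{t|d}\phi$ and differentiation); (ii) the condition as printed, $\frac1p+\frac{1}{\R^{N+k}}\leq 1+\alpha$, is a typo for $\frac1p+\frac1q\leq 1+\alpha$ (likewise $L^q(0,T)$ should read $L^q(c,d)$), and your proof correctly works with the intended condition.
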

\begin{remark}
	A simple sufficient condition for functions $f$ and $g$ to satisfy (\ref{IP}) is that $f,g\in C[c,d],$ such that
	$D^\alpha_{t|d}f(t),D^\alpha_{c|t}g(t)$ exist at every point $t\in[c,d]$ and are continuous.
\end{remark}

\begin{proposition}
	For $0<\alpha<1$, $-\infty<c<d<\infty$, we have the following identities
	\begin{equation}\label{Int3}
		D^\alpha_{c|t}I^\alpha_{c|t}\varphi(t)=\varphi(t),\,\hbox{a.e. $t\in(c,d)$}, \quad\hbox{for all}\,\,\varphi\in L^r(c,d),\,\, 1\leq r\leq\infty,
	\end{equation}
	and
	\begin{equation}\label{Int4}
		-\frac{d}{\mathrm{dt}} \, D^\alpha_{t|d}\varphi=D^{1+\alpha}_{t|d}\varphi,\quad \varphi\in AC^2[c,d].
	\end{equation}
\end{proposition}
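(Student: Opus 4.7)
The plan is to prove the two identities separately, as each reduces to a routine manipulation of Riemann--Liouville operators via Fubini's theorem and the Lebesgue differentiation theorem. Identity \eqref{Int3} is a left-inverse relation: applying a fractional derivative of order $\alpha$ to a fractional integral of the same order should recover $\varphi$. Identity \eqref{Int4} is essentially a bookkeeping check once the two branches of the definition of $D^\alpha_{t|d}$ (for $\alpha\in(0,1)$ and for $\alpha\in(1,2)$) are juxtaposed.

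For \eqref{Int3}, I would first unpack the derivative by writing
\[
D^\alpha_{c|t} I^\alpha_{c|t}\varphi(t) \;=\; \frac{d}{dt}\bigl( I^{1-\alpha}_{c|t} I^\alpha_{c|t}\varphi\bigr)(t),
\]
and then establish the semigroup identity $I^{1-\alpha}_{c|t} I^\alpha_{c|t}\varphi = I^{1}_{c|t}\varphi$. The latter I would obtain by inserting the integral definitions, switching the order of integration via Fubini (which is legitimate because $\varphi\in L^{r}(c,d)\subseteq L^{1}(c,d)$ on the bounded interval), and reducing the inner integral, after the substitution $s=\tau+(t-\tau)\sigma$, to a Beta function, using $B(\alpha,1-\alpha)=\Gamma(\alpha)\Gamma(1-\alpha)$. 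Once the semigroup identity is in place, $I^{1-\alpha}_{c|t} I^\alpha_{c|t}\varphi(t)=\int_{c}^{t}\varphi(s)\,ds$, and the Lebesgue differentiation theorem produces $\varphi(t)$ for almost every $t\in(c,d)$, which is the claimed equality.

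For \eqref{Int4}, the approach is more direct: I would rewrite both sides as the same second derivative of a singular integral. On the left, the definition of $D^\alpha_{t|d}$ for $\alpha\in(0,1)$ gives
\[
-\frac{d}{dt}\,D^\alpha_{t|d}\varphi(t) \;=\; \frac{d^{2}}{dt^{2}}\,I^{1-\alpha}_{t|d}\varphi(t) \;=\; \frac{1}{\Gamma(1-\alpha)}\,\frac{d^{2}}{dt^{2}}\int_{t}^{d}(s-t)^{-\alpha}\varphi(s)\,ds,
\]
while the definition of $D^{\beta}_{t|d}$ for $\beta=1+\alpha\in(1,2)$, applied with the substitution $\beta-1=\alpha$ and $2-\beta=1-\alpha$, yields exactly the same expression for $D^{1+\alpha}_{t|d}\varphi(t)$. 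The hypothesis $\varphi\in AC^{2}[c,d]$ is invoked to guarantee that these derivatives exist and can be taken classically, typically by shifting one derivative onto $\varphi$ through an integration by parts, which tames the singular kernel.

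The main obstacle, as I see it, is technical rather than conceptual, and is in each case concentrated in the handling of the singularities of the kernels $(t-s)^{-(1-\alpha)}$ and $(s-t)^{-\alpha}$. For \eqref{Int3}, the delicate point is verifying the applicability of Fubini when both kernels of the iterated integral become singular on the diagonal $s=\tau$; a clean way to proceed is to establish the semigroup identity first for smooth compactly supported $\varphi$ and then pass to the $L^{r}$ limit using the continuity of $I^\alpha_{c|t}$ on $L^{r}(c,d)$. For \eqref{Int4}, the analogous difficulty is arguing that the second derivative of $I^{1-\alpha}_{t|d}\varphi$ is legitimate under only $\varphi\in AC^{2}[c,d]$; the standard remedy, which I would implement, is to integrate by parts once inside the integral, transferring one $d/dt$ onto $\varphi'$ (which is absolutely continuous by hypothesis) and leaving a regularized kernel on which the remaining derivative acts harmlessly.
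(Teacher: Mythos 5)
Your proof is correct: the paper itself states this proposition without proof, citing \cite[Chapter~1]{Samko}, and your argument---the semigroup identity $I^{1-\alpha}_{c|t}I^{\alpha}_{c|t}\varphi=I^{1}_{c|t}\varphi$ via interchange of integration and the Beta integral $B(\alpha,1-\alpha)=\Gamma(\alpha)\Gamma(1-\alpha)$, followed by Lebesgue differentiation for \eqref{Int3}, together with the observation that both sides of \eqref{Int4} are by definition $\frac{1}{\Gamma(1-\alpha)}\frac{d^{2}}{dt^{2}}\int_{t}^{d}(s-t)^{-\alpha}\varphi(s)\,ds$, with one integration by parts using $\varphi'\in AC[c,d]$ legitimizing the classical derivatives---is precisely the standard proof in that reference. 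One simplification: the Fubini step needs no density-and-limit detour, since Tonelli applies directly (the inner kernel integral is the constant $B(\alpha,1-\alpha)$, so absolute integrability reduces to $\varphi\in L^{1}(c,d)$, which holds for every $r\in[1,\infty]$ on the bounded interval).
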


Given $T>0$, let us define the function $w_1$ by
\begin{equation}\label{w1}
	\displaystyle w_1(t)=\left(1-t/T\right)_+^\sigma,\quad \sigma\gg1
\end{equation}
%
%
%
%
%

\begin{lemma}\cite[(2.45), p. 40]{Samko}\label{lemma1} \\
	Let $T>0$, $0<\alpha<1$ and $m\geq0$. For all $t\in[0,T]$, we have
	\begin{equation}\label{Int6}
		D_{t|T}^{m+\alpha}w_1(t)=\frac{\Gamma(\sigma+1)}{\Gamma(\sigma+1-m-\alpha)}T^{-(m+\alpha)}(1-t/T)^{\sigma-\alpha-m}.
	\end{equation}
\end{lemma}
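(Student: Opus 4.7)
The plan is to reduce the computation to the classical Euler Beta-function integral via the definition of the right-sided Riemann--Liouville derivative. Since $m+\alpha\in(m,m+1)$, the definition yields
\begin{equation*}
D^{m+\alpha}_{t|T}w_1(t)=(-1)^{m+1}\frac{d^{m+1}}{\mathrm{dt}^{m+1}}I^{1-\alpha}_{t|T}w_1(t),
\end{equation*}
so the task boils down to computing the right-sided fractional integral $I^{1-\alpha}_{t|T}w_1$ in closed form and then differentiating $m+1$ times.

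For the integral step, I would start from
\begin{equation*}
I^{1-\alpha}_{t|T}w_1(t)=\frac{1}{\Gamma(1-\alpha)}\int_{t}^{T}(s-t)^{-\alpha}\left(1-\frac{s}{T}\right)^{\sigma}\,ds,
\end{equation*}
and apply the linear change of variables $s=t+(T-t)\tau$, $\tau\in[0,1]$. This turns the kernel into $(T-t)^{-\alpha}\tau^{-\alpha}$, the weight into $(1-t/T)^{\sigma}(1-\tau)^{\sigma}$, and the Jacobian into $(T-t)\,d\tau$. After pulling $(T-t)^{1-\alpha}=T^{1-\alpha}(1-t/T)^{1-\alpha}$ out of the integral, what remains is the Beta integral $B(1-\alpha,\sigma+1)=\Gamma(1-\alpha)\Gamma(\sigma+1)/\Gamma(\sigma+2-\alpha)$, giving
\begin{equation*}
I^{1-\alpha}_{t|T}w_1(t)=\frac{\Gamma(\sigma+1)}{\Gamma(\sigma+2-\alpha)}\,T^{1-\alpha}\left(1-\frac{t}{T}\right)^{\sigma+1-\alpha}.
\end{equation*}

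Next I would differentiate $m+1$ times in $t$. Each derivative brings down a factor $-1/T$ multiplied by the current exponent, and since $\sigma\gg1$ all exponents stay positive so the pointwise differentiation is unproblematic. The falling-factorial identity
\begin{equation*}
(\sigma+1-\alpha)(\sigma-\alpha)\cdots(\sigma+1-\alpha-m)=\frac{\Gamma(\sigma+2-\alpha)}{\Gamma(\sigma+1-\alpha-m)},
\end{equation*}
which is just an iterated use of $\Gamma(z+1)=z\Gamma(z)$, yields
\begin{equation*}
\frac{d^{m+1}}{\mathrm{dt}^{m+1}}\left(1-\frac{t}{T}\right)^{\sigma+1-\alpha}=\frac{(-1)^{m+1}}{T^{m+1}}\frac{\Gamma(\sigma+2-\alpha)}{\Gamma(\sigma+1-\alpha-m)}\left(1-\frac{t}{T}\right)^{\sigma-\alpha-m}.
\end{equation*}
The prefactor $(-1)^{m+1}$ in front of the derivative cancels this sign; combining the powers of $T$ as $T^{1-\alpha}\cdot T^{-(m+1)}=T^{-(m+\alpha)}$ and cancelling the common $\Gamma(\sigma+2-\alpha)$ then produces exactly the identity \eqref{Int6}.

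There is no serious obstacle: $w_1$ is as smooth as needed on $(0,T)$ once $\sigma$ is large, the Beta integral converges because $\alpha<1$ and $\sigma+1>0$, and every Gamma ratio is well defined provided $\sigma+1-\alpha-m>0$, which is implicit in the standing convention $\sigma\gg1$. The whole argument is a careful bookkeeping of constants once the Beta-function reduction is set up.
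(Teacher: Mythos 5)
Your proof is correct, and since the paper offers no argument of its own for this lemma --- it simply cites \cite[(2.45), p.~40]{Samko} --- your Beta-integral computation is precisely the standard derivation behind that textbook formula, so there is no methodological divergence to report. The one implicit restriction is that your reduction $D^{m+\alpha}_{t|T}=(-1)^{m+1}\frac{d^{m+1}}{\mathrm{dt}^{m+1}}I^{1-\alpha}_{t|T}$ requires $m$ to be a nonnegative integer, which is consistent with the paper's stated definitions (right-sided derivatives of order in $(0,1)$ and $(1,2)$, with exactly your sign convention) and with the only values $m=0,1$ actually used later, e.g.\ in Lemma~\ref{lemma2} and the proof of Theorem~\ref{T2}.
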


\begin{lemma}\label{lemma2}
	Let $T>0$, $0<\alpha<1$, $m\geq0$, $p>1$, and
	$$\sigma > \frac{p(\alpha+m-1) + 1}{p-1} .$$ 
	Then, we have
	\begin{equation}\label{Int9}
		\int_0^T \big(w_1(t)\big)^{-\frac{1}{p-1}}|D_{t|T}^{m+\alpha}w_1(t)|^{\frac{p}{p-1}}\,\mathrm{dt}=C\,T^{1-(m+\alpha)\frac{p}{p-1}},
	\end{equation}
	and
	\begin{equation}\label{Int8}
		\int_0^TD_{t|T}^{m+\alpha}w_1(t)\,\mathrm{dt}= C\,T^{-(m+\alpha)}.
	\end{equation}
\end{lemma}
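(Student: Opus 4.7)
The plan is to insert the explicit formula for $D_{t|T}^{m+\alpha}w_1(t)$ from Lemma \ref{lemma1} into each integrand and reduce both identities to elementary Beta-type integrals on $[0,1]$ via the substitution $u = 1-t/T$. Neither step involves anything deeper than bookkeeping of exponents, and the only point that requires a moment of attention is matching the integrability threshold to the stated hypothesis on $\sigma$.

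For identity \eqref{Int9}, I would first substitute $w_1(t)=(1-t/T)_+^{\sigma}$ together with
\[
D_{t|T}^{m+\alpha}w_1(t) \;=\; \frac{\Gamma(\sigma+1)}{\Gamma(\sigma+1-m-\alpha)}\,T^{-(m+\alpha)}(1-t/T)^{\sigma-m-\alpha}
\]
into the left-hand side. After collecting the powers of $(1-t/T)$ and $T$, the integrand takes the form
\[
\bigl(w_1(t)\bigr)^{-\frac{1}{p-1}}\bigl|D_{t|T}^{m+\alpha}w_1(t)\bigr|^{\frac{p}{p-1}} \;=\; C\,T^{-(m+\alpha)\frac{p}{p-1}}\,(1-t/T)^{\sigma-\frac{p(m+\alpha)}{p-1}},
\]
with $C$ depending only on $\sigma,m,\alpha,p$. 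A short algebraic check shows that the hypothesis $\sigma > \frac{p(\alpha+m-1)+1}{p-1}$ is equivalent to $\sigma - \frac{p(m+\alpha)}{p-1} > -1$, which is exactly the condition required for the remaining power of $(1-t/T)$ to be integrable on $[0,T]$. Performing the change of variables $u = 1-t/T$ then converts the residual integral into $T\int_0^1 u^{\sigma-p(m+\alpha)/(p-1)}\,du$, which yields a finite constant times $T$. Multiplying by the prefactor $T^{-(m+\alpha)p/(p-1)}$ produces the claimed power $T^{1-(m+\alpha)p/(p-1)}$.

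For identity \eqref{Int8}, Lemma \ref{lemma1} can be applied directly to give
\[
\int_0^T D_{t|T}^{m+\alpha}w_1(t)\,\mathrm{dt} \;=\; \frac{\Gamma(\sigma+1)}{\Gamma(\sigma+1-m-\alpha)}\,T^{-(m+\alpha)}\int_0^T (1-t/T)^{\sigma-m-\alpha}\,\mathrm{dt},
\]
and the same substitution $u=1-t/T$ resolves the integral. Here the integrability condition is merely $\sigma - m - \alpha > -1$, which is strictly weaker than the hypothesis because the difference
\[
\frac{p(\alpha+m-1)+1}{p-1} - (\alpha+m-1) \;=\; \frac{\alpha+m}{p-1}
\]
is positive (since $\alpha\in(0,1)$). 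Thus both identities follow from the same computation; the only genuine constraint is the sharp one arising in \eqref{Int9}, and no step is expected to present a real obstacle once the condition on $\sigma$ is recognized as the integrability threshold.
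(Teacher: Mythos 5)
Your proposal is correct and follows essentially the same route as the paper: substitute the explicit formula from Lemma \ref{lemma1}, collect the powers of $(1-t/T)$ and $T$, and reduce to an elementary integral via $u=1-t/T$. The only difference is that you verify precisely that the hypothesis $\sigma > \frac{p(\alpha+m-1)+1}{p-1}$ matches the integrability threshold $\sigma-\frac{p(m+\alpha)}{p-1}>-1$ (and implies the weaker one for \eqref{Int8}), whereas the paper simply invokes $\sigma\gg 1$; your version is, if anything, slightly more careful.
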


\begin{proof}
	Using Lemma \ref{lemma1} we have
	\begin{align*}
		\int_{0}^T \big(w_1(t)\big)^{-\frac{1}{p-1}}|D_{t|T}^{m+\alpha}w_1(t)|^{\frac{p}{p-1}}\,\mathrm{dt} &= C\,T^{-(m+\alpha)\frac{p}{p-1}}\int_{0}^T \big(w_1(t)\big)^{-\frac{1}{p-1}}\big(w_1(t)\big)^{\frac{p({\sigma}-\alpha-m)}{(p-1)\sigma}}\,\mathrm{dt}\\
		&= C\,T^{-(m+\alpha)\frac{p}{p-1}}\int_{0}^T(1-t/T)^{\sigma-(m+\alpha)\frac{ p}{p-1}}\,\mathrm{dt}\\
		&= C\,T^{1-(m+\alpha)\frac{p}{p-1}}\int_{0}^1(1-s)^{\sigma-(m+\alpha)\frac{ p}{p-1}}\,ds\\
		&= C\,T^{1-(m+\alpha)\frac{p}{p-1}},
	\end{align*}
	where we have used $\sigma \gg1$ to guarantee the integrability of the last integral. We obtain \eqref{Int8} in the same way.
\end{proof}

\begin{lemma}\cite[Proposition~2.2]{CDW}\label{lemma3} \\
	Let $T>0$, $0\leq \gamma<1$, $p>1$, $a>0$, and $b>0$, there exists a constant $K=K(a,b,p)$ such that, if $w\in C^1([0,T])$, $w>0$, and satisfies 
	$$w'(t)+a\,w(t)\geq b\int_0^t(t-s)^{-\gamma}w^p(s)\,ds,\quad\hbox{for all}\,\,T>0,$$
	then the following properties hold:
	\begin{itemize}
		\item[$\mathrm{(i)}$] If $T\geq 1$, then $w(0)<K$.
		\item[$\mathrm{(ii)}$] If $p\gamma\leq 1$, then $T<\infty$.
	\end{itemize}
\end{lemma}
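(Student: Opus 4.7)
The plan is to exploit the self-amplifying structure of the integro-differential inequality. Since the right-hand side is nonnegative, multiplying through by $e^{at}$ gives $(e^{at}w(t))' \geq 0$, which yields the elementary lower bound $w(t) \geq w(0)e^{-at}$ on $[0,T]$. Inserting this back into the memory integral and using $p>1$, we get
$$w'(t) + aw(t) \geq b\, w(0)^p \int_0^t (t-s)^{-\gamma} e^{-aps}\,ds =: b\, w(0)^p \,\Phi(t),$$
with $\Phi > 0$ explicit. Multiplying again by $e^{at}$ and integrating produces an improved bound of the form $w(t_1) \geq c_1 w(0)^p$ at a chosen time $t_1$. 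This one-step amplification is the engine of the argument.

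For part (i), I iterate this amplification inside the fixed interval $[0,1] \subset [0,T]$. Choosing a sequence of checkpoints $t_n = 1 - 2^{-n}$ and applying the amplification on each successive subinterval $[t_{n-1}, t_n]$, each time using the improved lower bound at $t_{n-1}$ as the new baseline, yields a chain $w(t_n) \geq c_n w(0)^{p^n}$. The constants $c_n$ decay only polynomially in the length $2^{-n}$, whereas $w(0)^{p^n}$ grows doubly exponentially in $n$. Hence, once $w(0) \geq K$ for a threshold $K = K(a,b,p,\gamma)$ depending only on the integrated constants, the sequence $c_n w(0)^{p^n}$ diverges; since $t_n \to 1^-$, this contradicts $w \in C^1([0,T])$ with $T \geq 1$.

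For part (ii), under the assumption $T = \infty$ and $p\gamma \leq 1$, I perform the amplification in the $t$-variable rather than on shrinking subintervals. From $w(t) \geq w(0)e^{-at}$ and the large-$t$ asymptotics of $\int_0^t(t-s)^{-\gamma}e^{-aps}\,ds$, the first step produces a power-type bound $w(t) \geq c\, t^{\beta_1}$ for large $t$. Re-inserting and estimating $\int_0^t (t-s)^{-\gamma} s^{p\beta_n}\,ds \gtrsim t^{1-\gamma+p\beta_n}$ yields a recursion of the form $\beta_{n+1} = p\beta_n + (1-\gamma)$ together with improving prefactors. The condition $p\gamma \leq 1$ is precisely what prevents the damping term $aw$ and the kernel loss $(t-s)^{-\gamma}$ from absorbing the bootstrap gain, forcing the iterated lower bounds to be incompatible with a globally defined $C^1$ solution; the contradiction then gives finite-time blow-up.

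The principal technical obstacle in both parts is the careful bookkeeping of the multiplicative constants through the iteration: one must verify that the amplification provided by the factor $w^p$ strictly dominates the decay coming from $e^{-at}$ (in part (i)) and from the singular weight $(t-s)^{-\gamma}$ in the memory integral. The borderline case $p\gamma = 1$ in part (ii) is the most delicate, since there the exponents $\beta_n$ grow only linearly in $n$ rather than exponentially, so closing the argument requires comparing the iterative bound against the precise large-time behavior supplied by the linear-plus-memory integral representation of $w$.
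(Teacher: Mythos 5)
First, a contextual note: the paper offers no proof of this lemma at all---it is quoted verbatim from \cite[Proposition~2.2]{CDW}---so your reconstruction can only be compared with the standard argument from that source, which it indeed follows in outline: the observation $(e^{at}w)'\geq 0$, hence $w(t)\geq w(0)e^{-at}$, a dyadic amplification on $[0,1]$ for part (i), and a power-type bootstrap in $t$ for part (ii). Your part (i) is essentially sound: the one-step estimate gives $m_n\geq \eta\,2^{-n(2-\gamma)}m_{n-1}^p$ at the checkpoints $t_n=1-2^{-n}$, and taking logarithms the correction series $\sum_n n\,p^{-n}$ converges, which is exactly what produces a finite threshold $K$. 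One correction of wording: the compounded constants $c_n$ do \emph{not} decay polynomially---the recursion forces $c_n\sim e^{-Cp^n}$, i.e.\ double-exponential decay---but the argument still closes because $c_n\,w(0)^{p^n}$ is comparable to $(e^{-C}w(0))^{p^n}$, which diverges precisely when $w(0)$ exceeds $K=e^{C}$; this is the mechanism you should state, rather than a race between polynomial decay and double-exponential growth.

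Part (ii), however, has two genuine gaps. First, the closure is missing: a lower bound $w(t)\geq c\,t^{\beta_n}$ with $\beta_n\to\infty$ does not by itself contradict global existence, since $w$ may simply grow. One needs the translation remark: if $w$ is global, then $v(s)=w(t_0+s)$ satisfies the same inequality (discard the nonnegative memory contribution over $[0,t_0]$), so part (i) applied on $[t_0,t_0+1]$ yields the \emph{uniform} bound $w(t_0)<K$ for every $t_0$, and it is against this bound that the divergent bootstrap produces the contradiction. Second, and more seriously, your description of the borderline case $p\gamma=1$ is incorrect: with $\beta_{n+1}=p\beta_n+(1-\gamma)$ and starting exponent $\beta_1=-\gamma$, the case $p\gamma=1$ puts $\beta_1$ exactly at the fixed point $-(1-\gamma)/(p-1)=-\gamma$, so the exponents do not grow linearly---they do not grow at all. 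Moreover your key estimate $\int_0^t(t-s)^{-\gamma}s^{p\beta}\,ds\gtrsim t^{1-\gamma+p\beta}$ fails precisely at $p\beta=-1$, where instead $\int_1^t(t-s)^{-\gamma}s^{-1}\,ds\sim t^{-\gamma}\log t$. The actual gain at criticality is purely logarithmic: one obtains $w(t)\gtrsim c_n\,t^{-\gamma}(\log t)^{\lambda_n}$ with $\lambda_{n+1}=p\lambda_n+1$, so $\lambda_n\sim p^n$ (geometric, not linear), and one must verify that the compounding prefactors $c_n$ degrade slowly enough---their logarithms again satisfy a $p$-geometric recursion with summable corrections---so that at a fixed large time the bounds exceed $K$ and contradict the uniform bound above. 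Since the paper invokes this lemma precisely to settle the equality case $p=1/\gamma$ in Theorems \ref{T3} and \ref{T5} (the strict inequality $p<1/\gamma$ being already covered by Theorem \ref{T2}), the case your sketch leaves unproved, with a wrong mechanism, is exactly the load-bearing one.
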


\begin{lemma}\label{lemma4}
	For $\varepsilon>0$, $A>0$, $c>0$, let
	
	$$\Theta(z)=c\,e^{-\varepsilon\sqrt{A+|x|^4+|y|^2}},\qquad z=(x,y)\in \mathbb{R}^{N+k}.$$
	Then
	$$
	\Delta_{\mathcal{G}}\Theta(z)\geq -\varepsilon[2(N+2)+k]\Theta(z),\qquad\hbox{for all}\,\,z\in\mathbb{R}^{N+k}.
	$$
	${}$
\end{lemma}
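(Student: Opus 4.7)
Set $\phi(z) := \sqrt{A + |x|^4 + |y|^2}$, so that $\Theta = c e^{-\varepsilon \phi}$ and, crucially, $\phi \geq |x|^2$. First I would work out the gradients and Laplacians of $\phi$ separately in the $x$ and $y$ blocks. A direct differentiation of $\phi^2 = A+|x|^4+|y|^2$ gives
\[
\nabla_x \phi = \frac{2|x|^2 x}{\phi}, \qquad \nabla_y \phi = \frac{y}{\phi},
\]
and a second differentiation yields
\[
\Delta_x \phi = \frac{2(N+2)|x|^2}{\phi} - \frac{4|x|^6}{\phi^3}, \qquad \Delta_y \phi = \frac{k}{\phi} - \frac{|y|^2}{\phi^3}.
\]

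Next, I would apply the chain rule to $\Theta = c e^{-\varepsilon \phi}$ componentwise, getting
\[
\Delta_x \Theta = \Theta\bigl(-\varepsilon \Delta_x \phi + \varepsilon^2 |\nabla_x \phi|^2\bigr), \qquad \Delta_y \Theta = \Theta\bigl(-\varepsilon \Delta_y \phi + \varepsilon^2 |\nabla_y \phi|^2\bigr),
\]
and combine them through $\Delta_{\mathcal{G}} = \Delta_x + |x|^2 \Delta_y$. The $|x|^2$ prefactor in front of $\Delta_y$ absorbs the bare $1/\phi$ term in $\Delta_y\phi$ into a multiple of $|x|^2/\phi$, which is the whole point of the construction. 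After collecting terms I would obtain
\[
\frac{\Delta_{\mathcal{G}} \Theta}{\Theta} = -\varepsilon\,\frac{\bigl(2(N+2) + k\bigr)|x|^2}{\phi} + \varepsilon\,\frac{4|x|^6 + |x|^2|y|^2}{\phi^3} + \varepsilon^2\,\frac{4|x|^6 + |x|^2|y|^2}{\phi^2}.
\]

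The second and third terms on the right are manifestly nonnegative, so they can be dropped in a lower bound. For the first term, the elementary inequality $|x|^2 \leq \phi$ (which holds because $\phi^2 \geq |x|^4$) gives $|x|^2/\phi \leq 1$, so
\[
\frac{\Delta_{\mathcal{G}} \Theta}{\Theta} \geq -\varepsilon\bigl[2(N+2) + k\bigr],
\]
which multiplied by $\Theta>0$ is exactly the desired inequality. The calculation is essentially bookkeeping; the only minor subtlety is recognizing at the end that the Grushin weighting $|x|^2$ exactly tames the bare $k/\phi$ contribution coming from $\Delta_y \phi$, so that every surviving negative term is controlled by the dimensional constant $2(N+2)+k$ via the single bound $|x|^2 \leq \phi$.
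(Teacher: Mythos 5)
Your proof is correct and follows essentially the same route as the paper: both compute $\Delta_{\mathcal{G}}\Theta$ explicitly (you differentiate through $\phi=\sqrt{A+|x|^4+|y|^2}$, the paper through $\rho=\phi^2$, producing term-by-term identical expressions), drop the manifestly nonnegative $\varepsilon$- and $\varepsilon^2$-terms, and conclude via the same key observation that $|x|^4\leq\phi^2$ gives $|x|^2/\phi\leq 1$, so the surviving negative term is bounded by $-\varepsilon[2(N+2)+k]$. There is no gap; the only difference is cosmetic bookkeeping in the choice of intermediate variable.
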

\begin{proof} Let $z=(x,y)\in \mathbb{R}^{N+k}$, and $\rho(z):=A+|x|^4+|y|^2$. We have
	$$\partial_{z_i} \Theta(z)=-\frac{\varepsilon}{2}\rho^{-\frac{1}{2}}\partial_{z_i}(\rho)\Theta(z)
	$$
	and then
	$$
	\partial_{z_i}^2 \Theta(z)=\frac{\varepsilon}{4}\rho^{-\frac{3}{2}}|\partial_{z_i}(\rho)|^2\Theta(z)-\frac{\varepsilon}{2}\rho^{-\frac{1}{2}}\partial_{z_i}^2(\rho)\Theta(z)+\frac{\varepsilon^2}{4}\rho^{-1}|\partial_{z_i}(\rho)|^2\Theta(z) . 
	$$
	Hence
	$$
	\Delta_x \Theta(z)=\frac{1}{4}\left(\varepsilon\rho^{-\frac{3}{2}}+\varepsilon^2\rho^{-1}\right)|\nabla_x(\rho)|^2\Theta(z)-2\varepsilon (N+2)|x|^2\rho^{-\frac{1}{2}}\Theta(z),
	$$
	and
	$$
	\Delta_y \Theta(z)=\frac{1}{4}\left(\varepsilon\rho^{-\frac{3}{2}}+\varepsilon^2\rho^{-1}\right)|\nabla_y(\rho)|^2\Theta(z)-\varepsilon k\,\rho^{-\frac{1}{2}}\Theta(z).
	$$
	As $\nabla_x(\rho)=4x|x|^2$, $\Delta_x(\rho)=4(N+2)|x|^2$, $\nabla_y(\rho)=2y$, and $\Delta_y(\rho)=2k$, we conclude that
	\begin{equation}\label{36}
		\begin{split}\Delta_{\mathcal{G}}\Theta(z)&=\Delta_x \Theta(z)+|x|^2\Delta_y \Theta(z)\\
			&=\frac{1}{4}\left(\varepsilon\rho^{-\frac{3}{2}}+\varepsilon^2\rho^{-1}\right)\left(|\nabla_x(\rho)|^2+|x|^2|\nabla_y(\rho)|^2\right)\Theta(z)-\varepsilon[2(N+2)+k]|x|^2\rho^{-\frac{1}{2}}\Theta(z)\\
			&\geq -\varepsilon[2(N+2)+k]|x|^2\rho^{-\frac{1}{2}}\Theta(z).
		\end{split}
	\end{equation}
	Finally, using the fact that $|x|^4\leq \rho$ implies $|x|^2 \rho^{-\frac{1}{2}}\leq 1$, we get
	$$\Delta_{\mathcal{G}}\Theta(z)\geq-\varepsilon[2(N+2)+k]\Theta(z).$$ 
	This completes the proof.
\end{proof}

\section{Non-existence of global solutions}\label{nonex}
In this section, we present and prove the non-existence results, which are categorized according to the cases $k_1=0,\ k_2=1$; $k_1=1,\ k_2=0$; and $k_1= k_2=1$. These results complete the characterization of the existence of global positive solutions (cf. \cite{Fino-Viana-25.1}) in each respective case. Additional non-existence results are established at the end of the section. Our approach is based on the notion of weak solutions. Throughout this section write $z=(x,y)\in\mathbb{R}^{N+k}$.

\begin{definition}[Weak solution]
	Let $u_0\in L_{\mathrm{loc}}^1(\R^{N+k})$, and $T>0$. We say that $u$ is a
	weak solution of the problem \eqref{*} if 
	$$I^\alpha_{0|t}(|u|^{p_1-1}u), |u|^{p_2-1}u \in L_{\mathrm{loc}}^1((0,T),L_{\mathrm{loc}}^1(\R^{N+k})) , $$
	and verifies the weak formulation
	\begin{eqnarray}\label{weaksolution}
		&&\int_{\R^{N+k}}u(z,\tau)\varphi(z,\tau)\,\mathrm{dz}-\int_{\R^{N+k}}u_0(z)\varphi(z,0)\,\mathrm{dz}\nonumber\\
		&&=k_1\Gamma(\alpha)\int_0^T\int_{\R^{N+k}}I^\alpha_{0|t}(|u|^{p_1-1}u)\varphi(z,t)\,\mathrm{dz}\,\mathrm{dt}+k_2\int_0^T\int_{\R^{N+k}}|u|^{p_2-1}u\varphi(z,t)\,\mathrm{dz}\,\mathrm{dt}\nonumber\\
		&&+\int_0^T\int_{\R^{N+k}}u(z,t)\Delta_{\mathcal{G}}\varphi(z,t)\,\mathrm{dz}\,\mathrm{dt}+\int_0^T\int_{\R^{N+k}}u(z,t)\varphi_t(z,t)\,\mathrm{dz}\,\mathrm{dt},
	\end{eqnarray}
	for all compactly supported $\varphi\in C^{2,1}(\R^{N+k}\times[0,T])$ such that $\varphi(T,\cdotp)=0$, where $\alpha:=1-\gamma\in(0,1)$.
\end{definition}

The next lemma relax the assumptions on the test function.

\begin{lemma}\label{lemma} 
	Let $u_0\in L^\infty(\R^{N+k})$, and $T>0$. Let $\psi_0\in C^{2,1}( \R^{N+k}\times[0,T))$ be such that
	\begin{equation}\label{49}
		\int_{ \R^{N+k}}\{|\psi_0(z,t)|+|\partial_t\psi_0(z,t)|+|\nabla_{x}\psi_0(z,t)|+|\nabla_{y}\psi_0(z,t)|+|\Delta_{\mathcal{G}}\psi_0(z,t)|\}\,dz<\infty,
	\end{equation}
	for all $t\in[0,T)$. If $ u\in  L_{\mathrm{loc}}^\infty((0,T);L^\infty(\R^{N+k}))$ is a weak solution of \eqref{*}, then 
	\begin{eqnarray*}
		&&\int_{\R^{N+k}}u(z,\tau) \psi_0(z,\tau)\,\mathrm{dz}-\int_{\R^{N+k}}u_0(z) \psi_0(z,0)\,\mathrm{dz}\nonumber\\
		&&=k_1\Gamma(\alpha)\int_0^\tau\int_{\R^{N+k}}I^\alpha_{0|t}(|u|^{p_1-1}u) \psi_0(z,t)\,\mathrm{dz}\,\mathrm{dt}+k_2\int_0^\tau\int_{\R^{N+k}}|u|^{p_2-1}u \psi_0(z,t)\,\mathrm{dz}\,\mathrm{dt}\nonumber\\
		&&+\int_0^\tau\int_{\R^{N+k}}u(z,t)\Delta_{\mathcal{G}} \psi_0(z,t)\,\mathrm{dz}\,\mathrm{dt}+\int_0^\tau\int_{\R^{N+k}}u(z,t) \partial_t\psi_0(z,t)\,\mathrm{dz}\,\mathrm{dt},
	\end{eqnarray*}
	for all $\tau\in[0,T]$.
\end{lemma}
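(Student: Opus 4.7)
The plan is to deduce the claim from the weak-solution definition \eqref{weaksolution} by an approximation argument: we build a family of compactly supported test functions that vanish at $t=T$ and which converges to $\psi_0$ in a sense compatible with the integrability hypothesis \eqref{49} and the $L^\infty$-bound on $u$. Concretely, for $\tau\in(0,T)$ and small $\delta>0$ with $\tau+\delta<T$, I would choose
\[
\varphi_{R,\delta}(z,t)\;:=\;\chi_R(z)\,\eta_\delta(t)\,\psi_0(z,t),
\]
where $\eta_\delta\in C^\infty([0,T])$ satisfies $\eta_\delta\equiv 1$ on $[0,\tau]$, $\eta_\delta\equiv 0$ on $[\tau+\delta,T]$ and $|\eta_\delta'|\le C/\delta$, while $\chi_R\in C_c^\infty(\R^{N+k})$ is $\chi_R(z):=\phi(d_{\mathcal{G}}(z)/R)$ with $\phi\in C_c^\infty(\R)$, $\phi\equiv1$ on $[0,1]$, $\mathrm{supp}\,\phi\subset[0,2]$, built from the Grushin homogeneous gauge $d_{\mathcal{G}}(z):=(|x|^4+|y|^2)^{1/4}$. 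Because $d_{\mathcal{G}}$ is invariant under the Grushin dilation $(x,y)\mapsto(\lambda x,\lambda^2 y)$, one obtains on the support of $\nabla\chi_R$ the uniform bounds $|\nabla_x\chi_R|\le C/R$, $|x|^2|\nabla_y\chi_R|\le C$ and $|\Delta_{\mathcal{G}}\chi_R|\le C/R^2$.

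Substituting $\varphi=\varphi_{R,\delta}$ into \eqref{weaksolution} (which is legitimate since $\varphi_{R,\delta}$ is compactly supported and $\varphi_{R,\delta}(\cdot,T)\equiv 0$) and expanding
\[
\partial_t\varphi_{R,\delta}=\chi_R\eta_\delta\,\partial_t\psi_0+\chi_R\psi_0\,\eta_\delta',
\]
\[
\Delta_{\mathcal{G}}\varphi_{R,\delta}=\eta_\delta\bigl(\chi_R\Delta_{\mathcal{G}}\psi_0+\psi_0\Delta_{\mathcal{G}}\chi_R+2\nabla_x\chi_R\cdot\nabla_x\psi_0+2|x|^2\nabla_y\chi_R\cdot\nabla_y\psi_0\bigr),
\]
the technical heart of the argument is to show that, as $R\to\infty$, every \emph{cross term} generated by $\Delta_{\mathcal{G}}\varphi_{R,\delta}-\eta_\delta\chi_R\Delta_{\mathcal{G}}\psi_0$ vanishes. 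Each such term is supported in the moving annulus $\{R\le d_{\mathcal{G}}\le 2R\}$ and is dominated by a constant multiple of $|\psi_0|$, $|\nabla_x\psi_0|$ or $|\nabla_y\psi_0|$; combining the $L^\infty$ bound on $u$ with the $L^1$-integrability \eqref{49} and absolute continuity of the Lebesgue integral forces these contributions to zero. The nonlinear source terms $k_1\Gamma(\alpha)I^\alpha_{0|t}(|u|^{p_1-1}u)$ and $k_2|u|^{p_2-1}u$ are likewise uniformly bounded on each time strip $[0,\tau+\delta]$, so dominated convergence yields the analogue of \eqref{weaksolution} with $\chi_R$ removed.

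It remains to let $\delta\to 0$. Since $\eta_\delta\to\mathbf{1}_{[0,\tau]}$ pointwise and remains uniformly bounded, all the terms not involving $\eta_\delta'$ converge to their counterparts on $[0,\tau]$ by dominated convergence. For the remaining one, standard Lebesgue-point reasoning gives
\[
\int_0^T\!\!\int_{\R^{N+k}}u(z,t)\,\psi_0(z,t)\,\eta_\delta'(t)\,\mathrm{dz}\,\mathrm{dt}\;\longrightarrow\;-\int_{\R^{N+k}}u(z,\tau)\,\psi_0(z,\tau)\,\mathrm{dz}
\]
for almost every $\tau\in(0,T)$, producing the claimed identity for a.e.\ $\tau$; one then extends to every $\tau\in[0,T]$ by noting that both sides are absolutely continuous in $\tau$, a consequence of the integral identity itself. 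The principal obstacle is the $R\to\infty$ step, specifically controlling the term $2|x|^2\nabla_y\chi_R\cdot\nabla_y\psi_0$: the degenerate weight $|x|^2$ is precisely what forces the Grushin-homogeneous gauge $d_{\mathcal{G}}$ as the natural cutoff scale.
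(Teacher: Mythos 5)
Your proof is correct, and at its core it is the same truncation argument as the paper's: multiply $\psi_0$ by a spatial cutoff, insert the product into the weak formulation \eqref{weaksolution}, expand $\Delta_{\mathcal{G}}$ of the product, kill the annulus-supported cross terms using $u\in L^\infty_{\mathrm{loc}}((0,T);L^\infty(\R^{N+k}))$ together with \eqref{49}, and conclude by dominated convergence as $R\to\infty$. You diverge in two places, both legitimately. First, the cutoff geometry: the paper uses the product $\Phi(|x|/R)\,\Phi(|y|/R^3)$, whose anisotropic scales make \emph{every} cross-term coefficient decay ($|\Delta_x\varphi_1|\le CR^{-2}$, $|\nabla_x\varphi_1|\le CR^{-1}$, $|x|^2|\Delta_y\varphi_2|\le CR^{-4}$, $|x|^2|\nabla_y\varphi_2|\le CR^{-1}$), so the limit follows at once from the uniform-in-$t$ $L^1$ bounds; your gauge cutoff $\phi(d_{\mathcal{G}}/R)$ leaves the coefficient $|x|^2|\nabla_y\chi_R|$ merely bounded rather than decaying, so you must --- and correctly do --- fall back on the vanishing of the $L^1$ tail of $|\nabla_y\psi_0|$ over the escaping annulus; your claimed bounds $|\nabla_x\chi_R|\le C/R$, $|\Delta_{\mathcal{G}}\chi_R|\le C/R^2$, $|x|^2|\nabla_y\chi_R|\le C$ do check out (minor slip: $d_{\mathcal{G}}$ is homogeneous of degree one under $(x,y)\mapsto(\lambda x,\lambda^2 y)$, not invariant). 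Second, the time variable: the weak formulation as used in the paper's proof already holds with upper limit $\tau$ and the boundary term $\int u(\cdot,\tau)\varphi(\cdot,\tau)\,\mathrm{dz}$ built in, so the paper needs no time cutoff at all (its factor $\varphi_3(t)=\Phi(t/\tau)$ is identically $1$ on $[0,\tau]$), whereas your mollifier $\eta_\delta$ plus the Lebesgue-point argument is a reconstruction forced by the stricter reading of the definition ($\varphi(T,\cdot)=0$, integrals over $[0,T]$); it yields the identity only for a.e.\ $\tau$, and your upgrade via absolute continuity of the right-hand side is the right fix, with the caveat that it selects good representatives of the time-slices $u(\cdot,\tau)$ on a null set rather than proving the identity for an arbitrary pointwise representative --- harmless here, and automatic under the paper's formulation. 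In short: same mechanism, with your version trading the paper's sharper anisotropic decay for a tail argument, and paying an extra (correctly executed) time-regularization step that the paper's definition lets one skip.
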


\begin{proof}
	Let $0<T\leq\infty$. Suppose that $ u\in  L_{loc}^\infty((0,T);L^\infty(\R^{N+k}))$ is a weak solution of \eqref{*}  on $[0,T)\times \R^{N+k}$, then we have
	\begin{eqnarray*}
		&&\int_{\R^{N+k}}u(z,\tau)\varphi(z,\tau)\,\mathrm{dz}-\int_{\R^{N+k}}u_0(z)\varphi(z,0)\,\mathrm{dz}\nonumber\\
		&&=k_1\Gamma(\alpha)\int_0^\tau\int_{\R^{N+k}}I^\alpha_{0|t}(|u|^{p_1-1}u)\varphi(z,t)\,\mathrm{dz}\,\mathrm{dt}+k_2\int_0^\tau\int_{\R^{N+k}}|u|^{p_2-1}u\varphi(z,t)\,\mathrm{dz}\,\mathrm{dt}\nonumber\\
		&&+\int_0^\tau\int_{\R^{N+k}}u(z,t)\Delta_{\mathcal{G}}\varphi(z,t)\,\mathrm{dz}\,\mathrm{dt}+\int_0^\tau\int_{\R^{N+k}}u(z,t)\varphi_t(z,t)\,\mathrm{dz}\,\mathrm{dt},
	\end{eqnarray*}
	for all compactly supported $\varphi\in C^{2,1}(\R^{N+k}\times[0,T))$, and $0\leq\tau<T$. Let $\tau\in[0,T)$ be a fixed number, and let
	$$\varphi(z,t):= \varphi_R(z)\varphi_3(t)\psi_0(z,t):=\varphi_1(x)\varphi_2(y)\varphi_3(t) \psi_0(z,t),\quad t\in[0,\tau),\,\,z=(x,y)\in Q,$$
	with
	$$\varphi_1(x):=\Phi\left(\abs{x}/R\right),\quad \varphi_2(y):=\Phi\left(\abs{y}/R^3\right),\quad \varphi_3(t):=\Phi\left(t/\tau\right),$$
	where $R\gg 1$, and $\Phi\in \mathcal{C}^\infty(\mathbb{R})$ is smooth non-increasing functions 
	satisfying 
	$$\mathbbm{1}_{(-\infty,1]} \leq \Phi\leq \mathbbm{1}_{(-\infty,2]}.$$
	Then
	\begin{eqnarray*}
		&&\int_{\mathcal{B}}u(\tau,z)\varphi_R(z)\psi_0(\tau,z)\,\mathrm{dz}-\int_{\mathcal{B}}u_0(z)\varphi_R(z)\psi_0(z,0)\,\mathrm{dz}\nonumber\\
		&&=k_1\Gamma(\alpha)\int_0^\tau\int_{\mathcal{B}}I^\alpha_{0|t}(|u|^{p_1-1}u)\varphi_R(z)\psi_0(z,t)\,\mathrm{dz}\,\mathrm{dt}+k_2\int_0^\tau\int_{\mathcal{B}}|u|^{p_2-1}u\varphi_R(z)\psi_0(z,t)\,\mathrm{dz}\,\mathrm{dt}\nonumber\\
		&&+\int_0^\tau\int_{\mathcal{B}}u(z,t)\Delta_{\mathcal{G}}\left(\varphi_R(z)\psi_0(z,t)\right)\,\mathrm{dz}\,\mathrm{dt}+\int_0^\tau\int_{\mathcal{B}}u(z,t)\varphi_R(z)\partial_t\psi_0(z,t)\,\mathrm{dz}\,\mathrm{dt},
	\end{eqnarray*}
	where
	$$\mathcal{B}=\{z=(x,y)\in \R^{N+k};\,\,|x|\leq 2R,\,|y|\leq 2R^3\}.$$
	Using the identity
	$$\Delta_{\mathcal{G}}(wv)=v \Delta_{\mathcal{G}}(w)+\nabla_x(w)\cdotp\nabla_x(v)+|x|^2\nabla_y(w)\cdotp\nabla_y(v)+w\Delta_{\mathcal{G}}(v).$$
	we get
	\begin{eqnarray*}
		&&\int_{\mathcal{B}}u(\tau,z)\varphi_R(z)\psi_0(\tau,z)\,\mathrm{dz}-\int_{\mathcal{B}}u_0(z)\varphi_R(z)\psi_0(z,0)\,\mathrm{dz}\nonumber\\
		&&=k_1\Gamma(\alpha)\int_0^\tau\int_{\mathcal{B}}I^\alpha_{0|t}(|u|^{p_1-1}u)\varphi_R(z)\psi_0(z,t)\,\mathrm{dz}\,\mathrm{dt}+k_2\int_0^\tau\int_{\mathcal{B}}|u|^{p_2-1}u\varphi_R(z)\psi_0(z,t)\,\mathrm{dz}\,\mathrm{dt}\nonumber\\
		&&+\int_0^\tau\int_{\mathcal{B}}u(z,t)\psi_0(z,t)\Delta_{\mathcal{G}}\left(\varphi_R(z)\right)\,\mathrm{dz}\,\mathrm{dt} + 2\int_0^\tau\int_{\mathcal{B}}u(z,t)\nabla_x(\varphi_R(z))\cdotp\nabla_x\left(\psi_0(z,t)\right)\,\mathrm{dz}\,\mathrm{dt}\nonumber\\
		&& + 2\int_0^\tau\int_{\mathcal{B}}u(z,t)|x|^2\nabla_y(\varphi_R(z))\cdotp\nabla_y\left(\psi_0(z,t)\right)\,\mathrm{dz}\,\mathrm{dt}+\int_0^\tau\int_{\mathcal{B}}u(z,t)\varphi_R(z)\Delta_{\mathcal{G}}\left(\psi_0(z,t)\right)\,\mathrm{dz}\,\mathrm{dt}\nonumber\\
		&&+\int_0^\tau\int_{\mathcal{B}}u(z,t)\varphi_R(z)\partial_t\psi_0(z,t)\,\mathrm{dz}\,\mathrm{dt}.
	\end{eqnarray*}
	By denoting
	$$\mathcal{C}_1=\{z=(x,y)\in \R^{N+k};\,\,R\leq |x|\leq 2R,\,|y|\leq 2R^3\},$$
	$$\mathcal{C}_2=\{z=(x,y)\in \R^{N+k};\,\, |x|\leq 2R,\,R^3\leq |y|\leq 2R^3\},$$
	and using 
	$$\Delta_{\mathcal{G}}\left(\varphi_R(z)\right)= \varphi_2(y)\Delta_{x}\left(\varphi_1(x)\right)+ |x|^2\varphi_1(x)\Delta_{y}\left(\varphi_2(y)\right),$$
	we get
	\begin{eqnarray}
		&&\int_{\mathcal{B}}u(\tau,z)\varphi_R(z)\psi_0(\tau,z)\,\mathrm{dz}-\int_{\mathcal{B}}u_0(z)\varphi_R(z)\psi_0(z,0)\,\mathrm{dz}\nonumber\\
		&&=k_1\Gamma(\alpha)\int_0^\tau\int_{\mathcal{B}}I^\alpha_{0|t}(|u|^{p_1-1}u)\varphi_R(z)\psi_0(z,t)\,\mathrm{dz}\,\mathrm{dt}+k_2\int_0^\tau\int_{\mathcal{B}}|u|^{p_2-1}u\varphi_R(z)\psi_0(z,t)\,\mathrm{dz}\,\mathrm{dt}\nonumber\\
		&&+ \int_0^\tau\int_{\mathcal{C}_1}u(z,t)\psi_0(z,t)\varphi_2(y)\Delta_{x}\left(\varphi_1(x)\right)\,\mathrm{dz}\,\mathrm{dt} +  \int_0^\tau\int_{\mathcal{C}_2}u(z,t)\psi_0(z,t)|x|^2\varphi_1(x)\Delta_{y}\left(\varphi_2(y)\right)\,\mathrm{dz}\,\mathrm{dt}\nonumber\\
		&&+\int_0^\tau\int_{\mathcal{C}_1}u(z,t)\nabla_x(\varphi_R(z))\cdotp\nabla_x\left(\psi_0(z,t)\right)\,\mathrm{dz}\,\mathrm{dt} +  \int_0^\tau\int_{\mathcal{C}_2}u(z,t)|x|^2\nabla_y(\varphi_R(z))\cdotp\nabla_y\left(\psi_0(z,t)\right)\,\mathrm{dz}\,\mathrm{dt}\nonumber\\
		&&+\int_0^\tau\int_{\mathcal{B}}u(z,t)\varphi_R(z)\Delta_{\mathcal{G}}\left(\psi_0(z,t)\right)\,\mathrm{dz}\,\mathrm{dt}+\int_0^\tau\int_{\mathcal{B}}u(z,t)\varphi_R(z)\partial_t\psi_0(z,t)\,\mathrm{dz}\,\mathrm{dt}. \label{50}
	\end{eqnarray}
	On the other hand,
	$$I_1:=\left|\int_0^\tau\int_{\mathcal{C}_1}u(z,t)\psi_0(z,t)\varphi_2(y)\Delta_{x}\left(\varphi_1(x)\right)\,\mathrm{dz}\,\mathrm{dt}\right| \leq \int_0^\tau\int_{\mathcal{C}_1} |u(z,t)| |\psi_0(z,t)|\varphi_2(y)\left|\Delta_{x}\left(\varphi_1(x)\right)\right|\,\mathrm{dz}\,\mathrm{dt}.$$
	Note that, as $\Phi\leq1$ and $\Phi\in C^\infty$ on $\mathcal{C}_1$, we can easily see that
	$$\left|\Delta_x\varphi_1\right|\leq\,C\,{R^{-2}},\qquad\hbox{on}\,\,\mathcal{C}_1,$$
	and therefore
	\begin{align*}
		I_1&\leq C\,R^{-2}\int_0^\tau\int_{\mathcal{C}_1} |u(z,t)| |\psi_0(z,t)|\,\mathrm{dz}\,\mathrm{dt}\\
		& \leq C\,R^{-2}\tau\sup_{t\in[0,\tau]}\|u(t,\cdotp)\|_{L^\infty( \R^{N+k})}\sup_{t\in[0,\tau]}\int_{ \R^{N+k}}|\psi_0(t,\eta)|\,d\eta,\end{align*}
	this implies, using \eqref{49}, that
	\begin{equation}\label{46}
		I_1\longrightarrow 0,\qquad \hbox{when}\,\,R\rightarrow+\infty.
	\end{equation}
	Similarly, using the fact that
	$$|x|^2\left|\Delta_y\varphi_2\right|\leq\,C\,{R^{-4}},\qquad\hbox{on}\,\,\mathcal{C}_2,$$
	we infer that
	\begin{equation}\label{461}
		I_2:=\int_0^\tau\int_{\mathcal{C}_2}u(z,t)\psi_0(z,t)|x|^2\varphi_1(x)\Delta_{y}\left(\varphi_2(y)\right)\,\mathrm{dz}\,\mathrm{dt}\longrightarrow 0,\qquad \hbox{when}\,\,R\rightarrow+\infty.
	\end{equation}
	In addition, 
	\begin{align*}I_3:&=\int_0^\tau\int_{\mathcal{C}_1}u(z,t)\nabla_x(\varphi_R(z))\cdotp\nabla_x\left(\psi_0(z,t)\right)\,\mathrm{dz}\,\mathrm{dt} \\&\leq\int_0^\tau\int_{\mathcal{C}_1} |u(z,t)| \left|\nabla_x(\varphi_R(z))\right| \left|\nabla_x\left(\psi_0(z,t)\right)\right|\,\mathrm{dz}\,\mathrm{dt}.\end{align*}
	As $\Phi\leq1$ and $\Phi\in C^\infty$ on $\mathcal{C}$, we can easily see that
	$$\left|\nabla_x\varphi_1\right|\leq\,C\,{R^{-1}},\qquad\hbox{on}\,\,\mathcal{C}_1,$$
	and therefore
	\begin{align*}I_3&\leq C\,R^{-1}\int_0^\tau\int_{\mathcal{C}_1} |u(z,t)|  \,|\nabla_x\psi_0(z,t)|\,dz\,dt\\&\leq C\,R^{-1}\tau\sup_{t\in[0,\tau]}\|u(t,\cdotp)\|_{L^\infty( \R^{N+k})}\sup_{t\in[0,\tau]}\int_{ \R^{N+k}}|\nabla_x\psi_0(z,t)|\,dz,\end{align*}
	this implies, using \eqref{49}, that
	\begin{equation}\label{47}
		I_3\longrightarrow 0,\qquad \hbox{when}\,\,R\rightarrow+\infty.
	\end{equation}
	Similarly, using the fact that
	$$|x|^2\left|\nabla_y\varphi_2\right|\leq\,C\,{R^{-1}},\qquad\hbox{on}\,\,\mathcal{C}_2,$$
	we infer that
	\begin{equation}\label{471}
		I_4:=\int_0^\tau\int_{\mathcal{C}_2}u(z,t)|x|^2\nabla_y(\varphi_R(z))\cdotp\nabla_y\left(\psi_0(z,t)\right)\,\mathrm{dz}\,\mathrm{dt}\longrightarrow 0,\qquad \hbox{when}\,\,R\rightarrow+\infty.
	\end{equation}
	Finally, letting $R\longrightarrow+\infty$ in \eqref{50} and using \eqref{49}, \eqref{46}-\eqref{471} together with Lebesgue's dominated convergence theorem we conclude the result.
\end{proof}


\subsection{The Cauchy problem (\ref{*})  without memory}

Here, we will consider the case $k_1=0,\ k_2=1$,
\begin{equation}\label{eq1}
	\left\{\begin{array}{ll}
		\,\, \displaystyle {u_{t}-\Delta_{\mathcal{G}}
			u =|u|^{p-1}u,} &\displaystyle {(z,t)\in {\mathbb{R}}^{N+k}\times (0,\infty)},\\
		{}\\
		\displaystyle{u(z,0)=  u_0(z),\qquad\qquad}&\displaystyle{z \in {\mathbb{R}}^{N+k},}
	\end{array}
	\right.
\end{equation} 
where $p>1$.

\begin{theorem}\label{T1}
	Let $u_0\in L^1(\R^{N+k})$ be such that $\displaystyle \int_{\R^{N+k}}u_0(z)\,\mathrm{dz}>0$. If
	$$
	p\leq1+\frac{2}{N+2k}:=p_c,
	$$
	then there is no global nonnegative weak solution to \eqref{eq1}.
\end{theorem}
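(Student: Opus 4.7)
The plan is to argue by contradiction via the test-function method, using the exponentially-decaying spatial weight from Lemma~\ref{lemma4}. Suppose a global nonnegative weak solution $u$ of \eqref{eq1} exists. I would first insert into \eqref{weaksolution} the compactly-supported product $\varphi_R(z)\Theta_\varepsilon(z)w_1(t)$, where $\Theta_\varepsilon(z)=c\,e^{-\varepsilon\sqrt{A+|x|^4+|y|^2}}$ comes from Lemma~\ref{lemma4}, $w_1(t)=(1-t/T)_+^\sigma$ is as in \eqref{w1}, and $\varphi_R$ is the spatial cutoff used in Lemma~\ref{lemma}; sending $R\to\infty$ via the rapid decay of $\Theta_\varepsilon$ and all its derivatives (so that \eqref{49} holds trivially) reduces the identity to its analogue against $\psi_0=\Theta_\varepsilon w_1$. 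Using $u\geq 0$, $w_1'\leq 0$, $w_1(T)=0$, and the pointwise bound $-\Delta_{\mathcal{G}}\Theta_\varepsilon\leq\varepsilon[2(N+2)+k]\Theta_\varepsilon$ from Lemma~\ref{lemma4}, I arrive at the master inequality
\begin{equation*}
\int_{\R^{N+k}}u_0\Theta_\varepsilon\,\mathrm{dz}+\int_0^T\!\!\int_{\R^{N+k}}u^p\,\Theta_\varepsilon w_1\,\mathrm{dz}\,\mathrm{dt}\leq \varepsilon C_1\!\int_0^T\!\!\int u\,\Theta_\varepsilon w_1\,\mathrm{dz}\,\mathrm{dt}+\int_0^T\!\!\int u\,\Theta_\varepsilon|w_1'|\,\mathrm{dz}\,\mathrm{dt}.
\end{equation*}

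Next, I would apply Hölder's inequality with conjugate exponents $p$ and $p'=p/(p-1)$ to each right-hand term, and Young's inequality to absorb the resulting factors $\bigl(\int u^p\Theta_\varepsilon w_1\bigr)^{1/p}$ back into the left, obtaining
\begin{equation*}
\int_{\R^{N+k}} u_0\Theta_\varepsilon\,\mathrm{dz}\leq C\,\varepsilon^{p'}\!\int_0^T\!\!\int\Theta_\varepsilon w_1\,\mathrm{dz}\,\mathrm{dt}+C\!\int_0^T\!\!\int\Theta_\varepsilon\,w_1^{-p'/p}|w_1'|^{p'}\,\mathrm{dz}\,\mathrm{dt}.
\end{equation*}
Direct computation (or Lemma~\ref{lemma2}) yields $\int_0^T w_1\,\mathrm{dt}\sim T$ and $\int_0^T|w_1'|^{p'}w_1^{-p'/p}\,\mathrm{dt}\sim T^{1-p'}$, while the anisotropic Grushin dilation $x=\varepsilon^{-1/2}\tilde x,\ y=\varepsilon^{-1}\tilde y$, which reflects the homogeneous dimension $N+2k$, gives $\int_{\R^{N+k}}\Theta_\varepsilon\,\mathrm{dz}\sim\varepsilon^{-(N+2k)/2}$. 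Combining,
\begin{equation*}
\int_{\R^{N+k}}u_0\Theta_\varepsilon\,\mathrm{dz}\leq C\bigl(\varepsilon^{p'-(N+2k)/2}\,T+\varepsilon^{-(N+2k)/2}\,T^{1-p'}\bigr).
\end{equation*}

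In the strictly subcritical range $p<1+\tfrac{2}{N+2k}$ one checks that $p'>\tfrac{N+2k+2}{2}$, so the open interval $\bigl(\tfrac{N+2k}{2(p'-1)},\,p'-\tfrac{N+2k}{2}\bigr)$ is nonempty; choosing $T=\varepsilon^{-\beta}$ with $\beta$ in this interval and letting $\varepsilon\to 0^+$ drives the right-hand side to $0$, whereas monotone convergence (since $\Theta_\varepsilon\nearrow c$ pointwise and $u_0\in L^1$) gives $\int u_0\Theta_\varepsilon\to c\int u_0>0$, a contradiction.

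The main obstacle will be the critical case $p=p_c$, in which the two exponents of $\varepsilon$ balance at a single admissible $\beta$ and the right-hand side above is merely bounded rather than vanishing. My plan there is standard: the bound first implies, upon letting $\varepsilon\to 0$ and using monotone convergence, that $u\in L^p(\R^{N+k}\times(0,\infty))$; I would then redo the Hölder step after replacing $w_1$ by a cutoff whose derivative is supported in $[T/2,T]$, so that the factor $\bigl(\int_{T/2}^{\infty}\!\!\int u^p\bigr)^{1/p}$ tends to $0$ as $T\to\infty$ by the tail of an integrable function while the balancing term remains bounded, thereby forcing again the contradiction $\int u_0\le 0$.
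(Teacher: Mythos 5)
Your subcritical argument is viable in outline and your bookkeeping is correct (at $p=p_c$ one has $p'=\tfrac{N+2k+2}{2}$, and for $p<p_c$ the interval of admissible $\beta$ is indeed nonempty), but there are two genuine gaps. The first is the reduction to the non-compact test function $\Theta_\varepsilon w_1$. Lemma \ref{lemma}, the paper's device for exactly this reduction, assumes $u_0\in L^\infty(\R^{N+k})$ and $u\in L^\infty_{\mathrm{loc}}((0,T);L^\infty(\R^{N+k}))$: its proof bounds the annulus error terms by $C R^{-2}\tau\sup_t\|u(t,\cdot)\|_{L^\infty}\sup_t\int|\psi_0|$. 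Theorem \ref{T1} assumes only $u_0\in L^1$ and that $u$ is a weak solution, so $u^p$ is merely locally integrable, and the rapid decay of $\Theta_\varepsilon$ by itself cannot control $\int_{R\le|x|\le2R}u\,\Theta_\varepsilon$ against an unknown growth of $u$ at infinity; condition \eqref{49} holding for $\Theta_\varepsilon w_1$ is not the issue. This is precisely why the paper deploys the exponential weight only in Theorem \ref{T3}, where $u\in C([0,\infty),C_0(\R^{N+k}))$ is additionally assumed. The gap is repairable without boundedness: insert $\varphi_R^\ell\,\Theta_\varepsilon w_1$ into \eqref{weaksolution} and Young-absorb the cutoff terms into $\tfrac14\int u^p\varphi_R^\ell\Theta_\varepsilon w_1$ (using, e.g., $|\nabla_x\Theta_\varepsilon|\le2\varepsilon|x|\Theta_\varepsilon$, so the cross term is $\lesssim\varepsilon\,\varphi_R^{\ell-1}\Theta_\varepsilon$ on the annulus), leaving remainders of size $\varepsilon^{p'}\int_{\mathrm{annulus}}\Theta_\varepsilon\to0$ as $R\to\infty$ — but you must argue this way rather than invoke Lemma \ref{lemma}.

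The second gap is fatal as written: your critical case does not close. In your master inequality the spatial error $\varepsilon C_1\int_0^T\!\!\int u\,\Theta_\varepsilon w_1$ is spread over \emph{all} of $\R^{N+k}$, and at $p=p_c$ the only admissible scaling is $T\sim\varepsilon^{-1}$, where H\"older gives
\begin{equation*}
\varepsilon\int_0^T\!\!\int u\,\Theta_\varepsilon w_1\;\le\;\Bigl(\int_0^T\!\!\int u^p\,\Theta_\varepsilon w_1\Bigr)^{1/p}\,\varepsilon\Bigl(\int_0^T\!\!\int\Theta_\varepsilon w_1\Bigr)^{1/p'}\;=\;O(1)\cdot\Bigl(\int_0^T\!\!\int u^p\,\Theta_\varepsilon w_1\Bigr)^{1/p}\longrightarrow C\,\|u\|_{L^p}\,,
\end{equation*}
which does not vanish unless $u\equiv0$ — the very thing to be proved. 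Your proposed fix (a time cutoff with derivative supported in $[T/2,T]$) kills only the $w_1'$ term; there is no analogous tail for the $\varepsilon$-term because the weight's ``derivative'' is not supported on a set escaping to infinity. The paper avoids this by using compactly supported anisotropic cutoffs $\Phi(|x|/T^{1/2})^\ell\Phi(|y|/T)^\ell\eta(t/T)^\ell$ with a single parameter $T$, so that \emph{every} derivative term is supported on the annular region $\Sigma_T$ (or on $[T/2,T]$ in time), and once $u\in L^p((0,\infty)\times\R^{N+k})$ is known, each H\"older factor $\bigl(\int_{\Sigma_T}u^p\varphi\bigr)^{1/p}$ tends to $0$ as $T\to\infty$. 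If you wish to keep the exponential weight, the critical case can still be rescued by an extra idea you did not propose: split $\R^{N+k}$ at the concentration scale $\{|x|\le\delta\varepsilon^{-1/2},\,|y|\le\delta\varepsilon^{-1}\}$, use the $L^p$-tail of $u$ on the outer region and the small volume factor $\delta^{(N+k)/p'}$ on the inner region, and let $\delta\to0$ after $\varepsilon\to0$. As submitted, however, the critical case $p=p_c$ (and, at the stated generality, the passage to $\Theta_\varepsilon w_1$) is not proved.
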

\begin{proof} The proof is by contradiction. Suppose that $u$ is a global nonnegative weak
	solution to \eqref{eq1}, that is, \eqref{weaksolution} holds for all $T\gg1$, with $k_1=0,\ k_2=1$. Let us choose 
	$$\varphi_	T(z,t)=\left(\varphi_1(x)\right)^\ell \left(\varphi_2(y)\right)^\ell\left(\varphi_3(t)\right)^\ell,\qquad z=(x,y)\in\mathbb{R}^{N+k},\,\,t\geq 0,$$
	with
	$$\varphi_1(x):=\Phi\left(\abs{x}/T^{1/2}\right),\quad \varphi_2(y):=\Phi\left(\abs{y}/T\right),\quad \varphi_3(t):=\eta\left(t/T\right),$$
	where $\ell= 2p/(p-1)$,  and $\Phi,\eta\in \mathcal{C}^\infty(\mathbb{R})$ are smooth non-increasing functions 
	satisfying 
	$$\mathbbm{1}_{(-\infty,1]} \leq \Phi\leq \mathbbm{1}_{(-\infty,2]}\quad\hbox{and}\quad \mathbbm{1}_{(-\infty,\frac{1}{2}]} \leq \eta\leq \mathbbm{1}_{(-\infty,1]}.$$
	Then
	\begin{eqnarray}\label{weak1}
		&{}&\int_{\Omega}u_0(x,y)\left(\varphi_1(x)\right)^\ell \left(\varphi_2(y)\right)^\ell\,dx\,dy+\int_{\Omega_T}|u(z,t)|^{p}\varphi(z,t)\,\mathrm{dz}\,\mathrm{dt}\nonumber\\
		&{}&=-\int_{\Omega_T}u(z,t)\varphi_t(z,t)\,\mathrm{dz}\,\mathrm{dt} -\int_{\Omega_T}u(z,t)\Delta_{\mathcal{G}}\varphi(z,t)\,\mathrm{dz}\,\mathrm{dt}\nonumber\\
		&{}&=: I_1+I_2,
	\end{eqnarray}
	where
	\[
	\Omega_T:=[0,T]\times\Omega\;\;\text{for}\;\; \Omega=\left\{ z=(x,y)\in{\mathbb{R}}^{N+k}\hspace{2
		mm};\hspace{2mm}|x|\leq 2 T^{1/2},\,|y|\leq 2 T\right\}.
	\]
	\noindent In order to estimate the right-hand side of \eqref{weak1}, we introduce the term 
	$\varphi^{1/p}\varphi^{-1/p}$ in $I_1$, and we use Young's inequality to obtain
	\begin{align}\label{I1}
		I_1\leqslant\frac{1}{4} \int_{\Omega_T}|u(z,t)|^{p}\varphi(z,t)\,\mathrm{dz}\,\mathrm{dt}+C\int_{\Omega_T}\varphi_1^\ell(x)\varphi_2^\ell(y)\varphi_3^{p^\prime}(t)\left|\partial_t \varphi_3(t)\right|^{p'}\mathrm{d}z\,\mathrm{d}t.
	\end{align}
	To estimate $I_2$,  notice that 
	\begin{align}\label{IneqA}
		\Delta_{\mathcal{G}}\varphi(z,t) &=\ell(\ell-1)\varphi^\ell_2(y)\varphi^\ell_3(t)\varphi^{\ell-2}_1(x)|\nabla_x\varphi_1(x)|^2+\ell\varphi^\ell_2(y)\varphi^\ell_3(t)\varphi^{\ell-1}_1(x)\Delta_x\varphi_1(x) \\
		&{}\quad +\,\ell(\ell-1)|x|^2\varphi^\ell_1(x)\varphi^\ell_3(t)\varphi^{\ell-2}_2(y)|\nabla_y\varphi_2(y)|^2+\ell|x|^2\varphi^\ell_1(x)\varphi^\ell_3(t)\varphi^{\ell-1}_2(y)\Delta_y\varphi_2(y) .\nonumber
	\end{align}
	Hence
	\begin{eqnarray}\label{I2}
		I_2&\leqslant&\frac{1}{4} \int_{\Omega_T}|u(z,t)|^{p}\varphi(z,t)\,\mathrm{dz}\,\mathrm{dt}+C\int_{\Omega_T}\varphi_2^\ell(y)\varphi_3^\ell(t)\left|\nabla_x \varphi_1(x)\right|^{2p'}\mathrm{d}z\,\mathrm{d}t\notag\\
		&{}&+\,C\int_{\Omega_T}\varphi_2^\ell(y)\varphi_3^\ell(t)\varphi_1^{p'}(x)\left|\Delta_x \varphi_1(x)\right|^{p'}\mathrm{d}z\,\mathrm{d}t\notag \\
		&{}&+C\int_{\Omega_T}|x|^{2p'}\varphi_1^\ell(x)\varphi_3^\ell(t)\left|\nabla_y \varphi_2(y)\right|^{2p'}\mathrm{d}z\,\mathrm{d}t\notag\\
		&{}&+\,C\int_{\Omega_T}|x|^{2p'}\varphi_1^\ell(x)\varphi_3^\ell(t)\varphi_2^{p'}(y)\left|\Delta_y \varphi_2(y)\right|^{p'}\mathrm{d}z\,\mathrm{d}t .
	\end{eqnarray}
	Combining \eqref{I1}, \eqref{I2}, and \eqref{weak1}, we arrive at
	\begin{eqnarray}\label{weak2}
		&{}&\int_{\Omega}u_0(x,y)\left(\varphi_1(x)\right)^\ell \left(\varphi_2(y)\right)^\ell\,dx\,dy+\frac{1}{2}\int_{\Omega_T}|u(z,t)|^{p}\varphi(z,t)\,\mathrm{dz}\,\mathrm{dt}\notag\\
		&{}&\leq C\int_{\Omega_T}\varphi_1^\ell(x)\varphi_2^\ell(y)\varphi_3^{p^\prime}(t)\left|\partial_t \varphi_3(t)\right|^{p'}\mathrm{d}z\,\mathrm{d}t+C\int_{\Omega_T}\varphi_2^\ell(y)\varphi_3^\ell(t)\left|\nabla_x \varphi_1(x)\right|^{2p'}\mathrm{d}z\,\mathrm{d}t\notag\\
		&{}&+\,C\int_{\Omega_T}\varphi_2^\ell(y)\varphi_3^\ell(t)\varphi_1^{p'}(x)\left|\Delta_x \varphi_1(x)\right|^{p'}\mathrm{d}z\,\mathrm{d}t+C\int_{\Omega_T}|x|^{2p'}\varphi_1^\ell(x)\varphi_3^\ell(t)\left|\nabla_y \varphi_2(y)\right|^{2p'}\mathrm{d}z\,\mathrm{d}t\notag\\
		&{}&+\,C\int_{\Omega_T}|x|^{2p'}\varphi_1^\ell(x)\varphi_3^\ell(t)\varphi_2^{p'}(y)\left|\Delta_y \varphi_2(y)\right|^{p'}\mathrm{d}z\,\mathrm{d}t .
	\end{eqnarray}
	Now, we introduce the scaled variables 
	$$\tau_1=T^{-1/2}x,\quad \tau_2=T^{-1}y,\quad \tau_3=T^{-1}t.$$ 
	We get
	\begin{equation}\label{weak3}
		\int_{\Omega}u_0(x,y)\left(\varphi_1(x)\right)^\ell \left(\varphi_2(y)\right)^\ell\,dx\,dy+\frac{1}{2}\int_{\Omega_T}|u(z,t)|^{p}\varphi(z,t)\,\mathrm{dz}\,\mathrm{dt}\leq C\,T^{-p'+\frac{N}{2}+k+1},
	\end{equation}
	which implies that
	\begin{equation}\label{weak4}
		\int_{\Omega}u_0(x,y)\left(\varphi_1(x)\right)^\ell \left(\varphi_2(y)\right)^\ell\,dx\,dy\leq C\,T^{-\delta} ,
	\end{equation}
	with $\delta:=p'-\frac{N}{2}-k-1$. Since $p\leq p_c\Leftrightarrow \delta\geq0$, we have to distinguish two cases:\\
	
	\noindent $\bullet$ The subcritical case $p<p_c$. Passing to the limit in \eqref{weak4} as $T\rightarrow\infty$, and using Lebesgue dominated convergence theorem together with $u_0\in L^1(\mathbb{R}^{N+k})$, we get
	\[
	0<\int_{\Omega}u_0(x,y)\,dx\,dy=\lim_{T\rightarrow\infty}\int_{\Omega}u_0(x,y)\left(\varphi_1(x)\right)^\ell \left(\varphi_2(y)\right)^\ell\,dx\,dy\leq 0;
	\]
	contradiction.\\
	
	\noindent $\bullet$ The critical case $p=p_c$. Using inequality
	\eqref{weak3}  with $T\rightarrow\infty $ and taking into account the fact that $p=p_c$, we have
	\begin{equation}\label{regularity}
		u\in L^p((0,\infty),L^p(\mathbb{R}^{N+k})).
	\end{equation}
	On the other hand, by repeating the same calculation as above by using H\"older's inequality instead of Young's inequality, we arrive at
	\begin{eqnarray*}
		&{}&\int_{\Omega}u_0(x,y)\left(\varphi_1(x)\right)^\ell \left(\varphi_2(y)\right)^\ell\,dx\,dy+\int_{\Omega_T}|u(z,t)|^{p}\varphi(z,t)\,\mathrm{dz}\,\mathrm{dt}\\
		&{}&\leq \left(\int_{T/2}^T\int_{\Omega}|u(z,t)|^{p}\varphi(z,t)\,\mathrm{dz}\,\mathrm{dt}\right)^{1/p}\left(\int_{\Omega_T}\varphi_1^\ell(x)\varphi_2^\ell(y)\varphi_3^{p^\prime}(t)\left|\partial_t \varphi_3(t)\right|^{p'}\mathrm{d}z\,\mathrm{d}t\right)^{1/p'}\\
		&{}&+\left(\int_{\Sigma_T}|u(z,t)|^{p}\varphi(z,t)\,\mathrm{dz}\,\mathrm{dt}\right)^{1/p}\left(\int_{\Omega_T}\varphi_2^\ell(y)\varphi_3^\ell(t)\left|\nabla_x \varphi_1(x)\right|^{2p'}\mathrm{d}z\,\mathrm{d}t\right)^{1/p'}\notag\\
		&{}&+\left(\int_{\Sigma_T}|u(z,t)|^{p}\varphi(z,t)\,\mathrm{dz}\,\mathrm{dt}\right)^{1/p}\left(\int_{\Omega_T}\varphi_2^\ell(y)\varphi_3^\ell(t)\varphi_1^{p'}(x)\left|\Delta_x \varphi_1(x)\right|^{p'}\mathrm{d}z\,\mathrm{d}t\right)^{1/p'}\\
		&{}&+\left(\int_{\Sigma_T}|u(z,t)|^{p}\varphi(z,t)\,\mathrm{dz}\,\mathrm{dt}\right)^{1/p}\left(\int_{\Omega_T}|x|^{2p'}\varphi_1^\ell(x)\varphi_3^\ell(t)\left|\nabla_y \varphi_2(y)\right|^{2p'}\mathrm{d}z\,\mathrm{d}t\right)^{1/p'}\notag\\
		&{}&+\left(\int_{\Sigma_T}|u(z,t)|^{p}\varphi(z,t)\,\mathrm{dz}\,\mathrm{dt}\right)^{1/p}\left(\int_{\Omega_T}|x|^{2p'}\varphi_1^\ell(x)\varphi_3^\ell(t)\varphi_2^{p'}(y)\left|\Delta_y \varphi_2(y)\right|^{p'}\mathrm{d}z\,\mathrm{d}t\right)^{1/p'}
	\end{eqnarray*}
	where
	$$\Sigma_T:=[0,T]\times\Sigma\;\;\text{for}\;\; \Sigma=\left\{ z=(x,y)\in{\mathbb{R}}^{N+k}\hspace{2
		mm};\hspace{2mm}T^{1/2}\leq |x|\leq 2 T^{1/2},\,T\leq |y|\leq 2 T\right\}.$$
	Using the same change of variable as above and taking into account the fact that $p=p_c$, we get
	\begin{align*}
		\int_{\Omega}u_0(x,y)\left(\varphi_1(x)\right)^\ell \left(\varphi_2(y)\right)^\ell\,dx\,dy  \leq & C \left(\int_{T/2}^T\int_{\Omega}|u(z,t)|^{p}\varphi(z,t)\,\mathrm{dz}\,\mathrm{dt}\right)^{1/p} \\
		&+C\left(\int_{\Sigma_T}|u(z,t)|^{p}\varphi(z,t)\,\mathrm{dz}\,\mathrm{dt}\right)^{1/p}.
	\end{align*}
	Recall that $\varphi_1, \varphi_2$ depend on $T$, so we rewrite them, respectively, as $\varphi_{1,T}, \varphi_{2,T}$. Passing to the limit as $T\rightarrow\infty$, and using \eqref{regularity}, we conclude that
	\[
	0<\int_{\mathbb{R}^{N+k}}u_0(x,y)\,dx\,dy=\lim_{T\rightarrow\infty}\int_{\Omega}u_0(x,y)\left(\varphi_{1,T}(x)\right)^\ell \left(\varphi_{2,T}(y)\right)^\ell\,dx\,dy\leq 0.
	\]
	We reached a contradiction.
\end{proof}


\subsection{The Cauchy problem (\ref{*}) with memory term}
In this subsection, we focus on the case $k_1=1$ and $k_2=0$,
\begin{equation}\label{eq2}
	\left\{\begin{array}{ll}
		\,\, \displaystyle {u_{t}-\Delta_{\mathcal{G}}
			u =\int_0^t(t-s)^{-\gamma}\abs{u}^{p-1}u(s)\,ds,} &\displaystyle {(z,t)\in {\mathbb{R}}^{N+k}\times (0,\infty),}\\
		{}\\
		\displaystyle{u(z,0)=  u_0(z),\qquad\qquad}&\displaystyle{z\in {\mathbb{R}}^{N+k},}
	\end{array}
	\right.
\end{equation} 
where $0\leq \gamma<1$, and $p>1$.

\begin{theorem}\label{T2} Assume $0< \gamma<1$, and let $u_0\in L^1(\R^{N+k})$ be such that $\displaystyle \int_{\R^{N+k}}u_0(z)\,\mathrm{dz}>0$. If
	$$p\leq1+\frac{2(2-\gamma)}{N+2k-2+2\gamma}:=p_0\quad\hbox{or}\quad
	p<\frac{1}{\gamma},$$
	then problem \eqref{eq2} admits no global nonnegative weak solution.
\end{theorem}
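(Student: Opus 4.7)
The plan is to mirror the structure of the proof of Theorem \ref{T1} but to split the argument along the two alternative conditions, since the condition $p\leq p_0$ is accessible via the spatial test-function machinery while $p<1/\gamma$ is not.

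For the case $p\leq p_0$ I would argue by contradiction and test the weak formulation \eqref{weaksolution} (with $k_1=1$, $k_2=0$) against
\[
  \varphi(z,t) \;=\; \varphi_1^\ell(x)\,\varphi_2^\ell(y)\, D^{\alpha}_{t|T} w_1(t),\qquad \alpha:=1-\gamma,
\]
with the anisotropic spatial cutoffs $\varphi_1(x)=\Phi(|x|/T^{1/2})$, $\varphi_2(y)=\Phi(|y|/T)$ suited to the Grushin scaling, $w_1(t)=(1-t/T)_+^{\sigma}$ for $\sigma\gg1$, and $\ell=2p/(p-1)$. By Lemma \ref{lemma1} one has $\varphi(\cdot,T)=0$, so that Lemma \ref{lemma} legitimizes the test. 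The key algebraic step is that the fractional integration by parts \eqref{IP}, combined with \eqref{Int3}, turns the nonlocal term into the clean local expression
\[
  \Gamma(\alpha)\!\int_0^T\!\!\int I^\alpha_{0|t}(u^p)\,\varphi\,dz\,dt \;=\; \Gamma(\alpha)\!\int_0^T\!\!\int u^p\,\varphi_1^\ell\varphi_2^\ell\, w_1\,dz\,dt,
\]
while \eqref{Int4} rewrites $\partial_t\varphi=-\varphi_1^\ell\varphi_2^\ell D^{1+\alpha}_{t|T}w_1$. I then apply Young's inequality to both resulting error integrals with weight $\varphi_1^\ell\varphi_2^\ell w_1$ on the $u^p$ side. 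The choice $\ell=2p/(p-1)$ produces the same spatial cancellations as in Theorem \ref{T1}; Lemma \ref{lemma2} (at $m=0$ and $m=1$) handles the time integrals; and the scalings $x\sim T^{1/2}$, $y\sim T$ together with $\alpha+\gamma=1$ make every error contribution coalesce into a single bound $C\,T^{\,N/2+k-(1+\alpha p)/(p-1)}$.

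Dropping the nonnegative initial-data term from the left yields
\[
  \int_0^T\!\!\int u^p\,\varphi_1^\ell\varphi_2^\ell\, w_1\,dz\,dt \;\leq\; C\,T^{\,N/2+k-(1+\alpha p)/(p-1)}.
\]
A short algebraic manipulation shows that the exponent $N/2+k-(1+\alpha p)/(p-1)$ is strictly negative precisely when $p<p_0$ and zero when $p=p_0$. In the subcritical case, letting $T\to\infty$ and invoking monotone convergence yields $\int_0^\infty\!\!\int u^p=0$, so $u\equiv 0$, which contradicts $\int u_0>0$ through the weak formulation. At the critical level $p=p_0$ I would argue as in the second half of the proof of Theorem \ref{T1}: the bound upgrades to $u\in L^p((0,\infty)\times\R^{N+k})$, and thereafter Hölder's inequality replaces Young's, so absolute continuity of the integral over the scaled annulus $\{T^{1/2}\leq|x|\leq 2T^{1/2},\ T\leq|y|\leq 2T\}$ sends the error terms to zero and recovers the contradiction.

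For the second alternative $p<1/\gamma$, the test-function scaling alone is insufficient, so I would reduce to Lemma \ref{lemma3} via an integrated ODE. Take $\Theta$ as in Lemma \ref{lemma4} (with $A,\varepsilon>0$ chosen so $\Theta\in L^1(\R^{N+k})$, which holds because $\sqrt{A+|x|^4+|y|^2}\gtrsim|x|^2+|y|$ at infinity), and set $F(t):=\int u(z,t)\,\Theta(z)\,dz$. Testing the weak formulation against $\Theta$ (approximated by compactly supported truncations in the spirit of Lemma \ref{lemma}), using the bound $\Delta_{\mathcal{G}}\Theta\geq-\varepsilon[2(N+2)+k]\,\Theta$ of Lemma \ref{lemma4}, and applying Jensen's inequality against the probability measure $\Theta/\|\Theta\|_{L^1}$, one arrives at
\[
  F'(t)+\varepsilon[2(N+2)+k]\,F(t) \;\geq\; \|\Theta\|_{L^1}^{1-p}\int_0^t(t-s)^{-\gamma}F^p(s)\,ds,
\]
with $F(0)=\int u_0\,\Theta>0$. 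Since $p\gamma<1$, Lemma \ref{lemma3}(ii) then forces the maximal existence time of $F$ to be finite, contradicting the globality of $u$.

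The principal obstacle in the first case is the weight bookkeeping: with the Grushin anisotropy one must verify that the four spatial-derivative error pieces arising from $\nabla_x\varphi_1$, $\Delta_x\varphi_1$, $|x|^2\nabla_y\varphi_2$, $|x|^2\Delta_y\varphi_2$, together with the time-derivative piece involving $D^{1+\alpha}_{t|T}w_1$, all scale as the \emph{same} power of $T$, namely $T^{\,N/2+k-(1+\alpha p)/(p-1)}$; it is precisely this common exponent, rather than the one coming from the initial-data bound alone, that exhibits $p_0$ as the sharp threshold. In the second case the main technical point is the admissibility of the non-compactly supported $\Theta$, which is handled by the truncation argument in Lemma \ref{lemma} via the integrability condition \eqref{49}.
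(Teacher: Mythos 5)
Your subcritical step is essentially the paper's argument: the paper uses the same test function (it applies $D^\alpha_{t|T}$ to $\widetilde{\varphi}=\varphi_1^\ell\varphi_2^\ell(1-t/T)_+^\ell$), and your exponent $N/2+k-(1+\alpha p)/(p-1)$ is exactly the paper's $T^{1-(1+\alpha)p'}R^{N/2+k}$ evaluated at $R=T$. The genuine gap is at the critical level $p=p_0$, where your single-scale choice $R=T$ cannot close the argument. The problematic error term is the time one, coming from $D^{1+\alpha}_{t|T}$: after scaling with $R=T$ it has size $T^{1-(1+\alpha)p'+N/2+k}=T^{0}=O(1)$, and because the fractional derivative is nonlocal this term is spread over the whole cylinder $[0,T]\times\{|x|\le 2T^{1/2},\,|y|\le 2T\}$ --- it is not supported on any annulus, in space or in time. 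So the absolute-continuity trick you borrow from Theorem \ref{T1} does not touch it, and applying H\"older to it merely returns a constant times $\bigl(\int_{\Omega_T}u^p\widetilde{\varphi}\,\mathrm{dz}\,\mathrm{dt}\bigr)^{1/p}$, which tends to $\|u\|_{L^p}>0$ rather than to zero. (In Theorem \ref{T1} the analogous term was harmless only because $\partial_t\varphi_3$ there was supported in $[T/2,T]$; nonlocality destroys this.) The paper resolves this by decoupling the scales: it sets $R=TK^{-1}$, keeps Young's inequality on the time term so that its remainder becomes $CK^{-N/2-k}$, applies H\"older only to the four spatial terms (whose annulus $\Sigma_T$ escapes to infinity, so $u\in L^p$ kills them as $T\to\infty$), and then lets $K\to\infty$. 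Without this two-parameter device your critical case fails as written.

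Your second alternative, reducing $p<1/\gamma$ to Lemma \ref{lemma3} via $\Theta$, is also not available under the hypotheses of Theorem \ref{T2}. Lemma \ref{lemma} requires $u\in L^\infty_{\mathrm{loc}}((0,T);L^\infty(\R^{N+k}))$ and $u_0\in L^\infty$, and Lemma \ref{lemma3} requires $w\in C^1([0,T])$ with $w>0$; none of this is granted for a mere weak solution with $u_0\in L^1$. Worse, $F(0)=\int u_0\,\Theta\,\mathrm{dz}>0$ does not follow from $\int u_0\,\mathrm{dz}>0$: the theorem allows sign-changing $u_0$, and the weight $\Theta$ may emphasize the negative part. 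This is precisely why the paper states the $\Theta$/Lemma \ref{lemma3} argument separately as Theorem \ref{T3}, under the stronger hypotheses $u_0\in C_0(\R^{N+k})$, $u_0\ge0$, $u\in C([0,\infty),C_0(\R^{N+k}))$ (which in exchange reaches $p\gamma\le 1$, including $p=1/\gamma$). The paper's actual proof of $p<1/\gamma$ stays inside the test-function framework and is simpler: keep the spatial scale $R$ fixed and let $T\to\infty$ alone in \eqref{weak8}; since $1-\alpha p'<0$ exactly when $p<1/\gamma$, while $1-(1+\alpha)p'<0$ always, the right-hand side vanishes, giving $u\equiv0$ on every fixed ball, hence everywhere, and then the usual contradiction with $\int u_0>0$. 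A minor related slip: you ``drop the nonnegative initial-data term,'' but $u_0$ need not be nonnegative; the paper instead bounds that term by $CT^{-\alpha}\|u_0\|_{L^1}$, which is harmless.
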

To complete the case of $p=1/\gamma$, we have
\begin{theorem}\label{T3} Assume $0\leq \gamma<1$, and let $u_0\in C_0(\R^{N+k})$ be such that $u_0\geq0$ and $u_0\not\equiv0$. If
	$$p\gamma \leq 1,$$
	then problem \eqref{eq2} admits no global nonnegative weak solution $u\in C([0,\infty),C_0(\R^{N+k}))$.
\end{theorem}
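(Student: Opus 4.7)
The plan is to combine the relaxed weak formulation of Lemma \ref{lemma} with the exponentially decaying test function from Lemma \ref{lemma4}, reducing the problem to the scalar integrodifferential inequality of Lemma \ref{lemma3}. Unlike the scaling-based test function method used for Theorems \ref{T1}--\ref{T2}, this approach sidesteps the dimension count entirely and therefore reaches the endpoint $p\gamma = 1$.

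Argue by contradiction: assume $u\in C([0,\infty),C_0(\mathbb{R}^{N+k}))$ is a global nonnegative weak solution. Fix $A,\varepsilon>0$ and take the weight
$$\Theta(z)=c\,e^{-\varepsilon\sqrt{A+|x|^4+|y|^2}},\qquad z=(x,y)\in\mathbb{R}^{N+k},$$
with $c>0$ chosen so that $\int_{\mathbb{R}^{N+k}}\Theta\,\mathrm{dz}=1$. Using the identity \eqref{36} and the bounds $|x|^4,|y|^2\le\rho$ and $|x|^2|y|^2\le\rho^{3/2}$, one checks that $|\nabla_x\Theta|,|\nabla_y\Theta|,|\Delta_{\mathcal G}\Theta|\le C\Theta$ pointwise; in particular $\Theta$ and all its relevant derivatives lie in $L^1(\mathbb{R}^{N+k})$, so the integrability hypothesis \eqref{49} of Lemma \ref{lemma} is met. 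Setting $F(\tau):=\int_{\mathbb{R}^{N+k}}u(z,\tau)\Theta(z)\,\mathrm{dz}$ and invoking Lemma \ref{lemma} with $\psi_0\equiv\Theta$ (noting $\partial_t\Theta=0$ and $\Gamma(\alpha)I^\alpha_{0|t}(u^p)(t)=\int_0^t(t-s)^{-\gamma}u^p(s)\,\mathrm{d}s$), one obtains
$$F(\tau)-F(0)=\int_0^\tau\!\!\int_0^t(t-s)^{-\gamma}\!\!\int_{\mathbb{R}^{N+k}}u^p(z,s)\Theta(z)\,\mathrm{dz}\,\mathrm{d}s\,\mathrm{dt}+\int_0^\tau\!\!\int_{\mathbb{R}^{N+k}}u(z,t)\Delta_{\mathcal G}\Theta(z)\,\mathrm{dz}\,\mathrm{dt}.$$

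Since $u\in C([0,\infty),C_0(\mathbb{R}^{N+k}))$ and $|\Delta_{\mathcal G}\Theta|\le C\Theta\in L^1$, both time-integrands above are continuous in $t$; hence $F\in C^1([0,\infty))$. Differentiating, using the lower bound $\Delta_{\mathcal G}\Theta\ge-\varepsilon[2(N+2)+k]\Theta$ of Lemma \ref{lemma4}, and applying Jensen's inequality for the convex map $s\mapsto s^p$ against the probability measure $\Theta(z)\,\mathrm{dz}$ (so that $\int u^p\Theta\ge F^p$), one arrives at the pointwise inequality
$$F'(\tau)+\varepsilon[2(N+2)+k]\,F(\tau)\ \ge\ \int_0^\tau(\tau-s)^{-\gamma}F^p(s)\,\mathrm{d}s,\qquad \tau>0.$$
Since $u_0\ge 0$, $u_0\not\equiv 0$, $u_0\in C_0$ and $\Theta>0$, we have $F(0)>0$; the integrated form of $F'+\varepsilon[2(N+2)+k]F\ge 0$ then gives $F(\tau)\ge F(0)e^{-\varepsilon[2(N+2)+k]\tau}>0$ for all $\tau$.

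The hypotheses of Lemma \ref{lemma3} are therefore satisfied by $F$ on $[0,T]$ for every $T>0$. As $p\gamma\le 1$, part (ii) of that lemma forces the existence interval to be finite, contradicting $F\in C^1([0,\infty))$. This contradiction completes the argument. The chief technical point, which the exponential weight $\Theta$ is tailored to handle, is ensuring simultaneously that (a) $\Theta$ is integrable so $F$ is well-defined, (b) $|\Delta_{\mathcal G}\Theta|\lesssim \Theta$ so the dissipative term can be absorbed into $F$ itself, and (c) $\Theta\,\mathrm{dz}$ is a probability measure so Jensen yields a closed ODE inequality purely in $F$ --- exactly the features required by Lemma \ref{lemma3}.
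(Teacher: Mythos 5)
Your proposal is correct and takes essentially the same route as the paper's proof: the same weighted functional $f(t)=\int_{\R^{N+k}}u(z,t)\Theta(z)\,\mathrm{dz}$ built from Lemma \ref{lemma4}, the relaxed weak formulation of Lemma \ref{lemma}, Jensen's inequality against the probability measure $\Theta\,\mathrm{dz}$, and Lemma \ref{lemma3}(ii); the only (cosmetic) difference is that the paper derives the ODE inequality by testing with $\Theta(z)\psi_T(t)$ for arbitrary $\psi_T$ and passing through a distributional identity, whereas you take the time-independent test function $\psi_0\equiv\Theta$ and differentiate the integrated identity directly. One minor inaccuracy worth fixing: the pointwise bounds $|\nabla_x\Theta|\le C\Theta$ and $|\Delta_{\mathcal{G}}\Theta|\le C\Theta$ are not literally true (e.g.\ $|\nabla_x\Theta|=2\varepsilon|x|^3\rho^{-1/2}\Theta$, which is only $\lesssim |x|\,\Theta$, and the cross term gives $|\Delta_{\mathcal{G}}\Theta|\lesssim(1+\rho^{1/2})\Theta$), but since $\Theta$ decays like $e^{-\varepsilon|x|^2}$ in $x$ and $e^{-\varepsilon|y|}$ in $y$, these polynomially weighted quantities remain in $L^1(\R^{N+k})$, so hypothesis \eqref{49} still holds and your argument is unaffected.
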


\begin{proof}[Proof of Theorem \ref{T2}]

	The proof is by contradiction. Suppose that $u$ is a global nonnegative weak
	solution to \eqref{eq2}, then,  \eqref{weaksolution} holds for all $T\gg1$, with $k_1=1$ and $k_2=0$. Let $R$ and $T$ be large parameters in $(0,\infty)$. Let us choose 
	$$\varphi(z,t)=D^\alpha_{t|T}\left(\widetilde{\varphi}(z,t)\right):=D^\alpha_{t|T}\left(\left(\varphi_1(x)\right)^\ell \left(\varphi_2(y)\right)^\ell\left(\varphi_3(t)\right)^\ell\right),\qquad z=(x,y)\in\mathbb{R}^{N+k},\,\,t\geq 0,$$
	with $\ell= 2p/(p-1)$,
	$$\varphi_1(x):=\Phi\left(\abs{x}/R^{1/2}\right),\quad \varphi_2(y):=\Phi\left(\abs{y}/R\right),\quad \varphi_3(t):=\left(1-t/T\right)_+,$$
	where $\Phi\in \mathcal{C}^\infty(\mathbb{R})$ is a smooth non-increasing function 
	satisfying $\mathbbm{1}_{(-\infty,1]} \leq \Phi\leq \mathbbm{1}_{(-\infty,2]}$.
	Thus
	\begin{eqnarray}\label{weak5}
		&&\int_{\Omega}u_0(x,y)\left(\varphi_1(x)\right)^\ell \left(\varphi_2(y)\right)^\ell D^\alpha_{t|T}(\varphi_3)(0)\,dx\,dy + \Gamma(\alpha) \int_{\Omega_T} I^\alpha_{0|t}(u^{p}) \left(D^{\alpha}_{t|T}\widetilde{\varphi}(z,t)\right)\,\mathrm{dz}\,\mathrm{dt}\nonumber\\
		&&=-\int_{\Omega_T} u(z,t) \left(\partial_tD^{\alpha}_{t|T}\widetilde{\varphi}(z,t)\right)\,\mathrm{dz}\,\mathrm{dt} -\int_{\Omega_T}u(z,t) \left(\Delta_{\mathcal{G}}D^{\alpha}_{t|T}\widetilde{\varphi}(z,t) \right)\,\mathrm{dz}\,\mathrm{dt}
	\end{eqnarray}
	where
	\[
	\Omega_T:=[0,T]\times\Omega\;\;\text{for}\;\; \Omega=\left\{ z=(x,y)\in{\mathbb{R}}^{N+k}\hspace{2
		mm};\hspace{2mm}|x|\leq 2 R^{1/2},\,|y|\leq 2 R\right\}.
	\]
	Furthermore, using \eqref{IP}, \eqref{Int3}, \eqref{Int4}, and \eqref{Int6} in  \eqref{weak5}, we obtain
	\begin{eqnarray}
		&&C\,T^{-\alpha}\int_{\Omega}u_0(x,y)\left(\varphi_1(x)\right)^\ell \left(\varphi_2(y)\right)^\ell \,dx\,dy+\Gamma(\alpha)\int_{\Omega_T} u^p(z,t) \widetilde{\varphi}(z,t)\,\mathrm{dz}\,\mathrm{dt} \nonumber\\
		&&=\int_{\Omega_T}u(z,t) \left(D^{1+\alpha}_{t|T}\widetilde{\varphi}(z,t)\right) \,\mathrm{dz}\,\mathrm{dt} -\int_{\Omega_T}u(z,t) \left(\Delta_{\mathcal{G}}D^{\alpha}_{t|T}\widetilde{\varphi}(z,t)\right)\,\mathrm{dz}\,\mathrm{dt} \nonumber\\
		&&=: J_1+J_2. \label{weak6}
	\end{eqnarray}
	\noindent In order to estimate $J_1$ and $J_2$, we introduce the term 
	$\widetilde{\varphi}^{1/p}\widetilde{\varphi}^{-1/p}$ and we use Young's inequality to obtain
	\begin{align}\label{J1}
		J_1\leqslant\frac{\Gamma(\alpha)}{4} \int_{\Omega_T}u^{p}(z,t)\widetilde{\varphi}(z,t)\,\mathrm{dz}\,\mathrm{dt}+C\int_{\Omega_T}\varphi_1^\ell(x)\varphi_2^\ell(y)\varphi_3^{-\ell p^\prime/p}(t)\left|D^{1+\alpha}_{t|T} \varphi^\ell_3(t)\right|^{p'}\mathrm{d}z\,\mathrm{d}t.
	\end{align}
	To estimate $J_2$, we apply identity \eqref{IneqA} and again the Young's inequality in the following way
	\begin{eqnarray}\label{J2}
		J_2&\leqslant&\frac{\Gamma(\alpha)}{4} \int_{\Omega_T}u^{p}(z,t)\widetilde{\varphi}(z,t)\,\mathrm{dz}\,\mathrm{dt}+C\int_{\Omega_T}\varphi_2^\ell(y)\varphi_3^{-\ell p^\prime/p}(t)\left|D^{\alpha}_{t|T} \varphi^\ell_3(t)\right|^{p'}\left|\nabla_x \varphi_1(x)\right|^{2p'}\mathrm{d}z\,\mathrm{d}t\notag\\
		&{}&+\,C\int_{\Omega_T}\varphi_2^\ell(y)\varphi_1^{p'}(x)\varphi_3^{-\ell p^\prime/p}(t)\left|D^{\alpha}_{t|T} \varphi^\ell_3(t)\right|^{p'}\left|\Delta_x \varphi_1(x)\right|^{p'}\mathrm{d}z\,\mathrm{d}t\nonumber\\
		&&+C\int_{\Omega_T}|x|^{2p'}\varphi_1^\ell(x)\varphi_3^{-\ell p^\prime/p}(t)\left|D^{\alpha}_{t|T} \varphi^\ell_3(t)\right|^{p'}\left|\nabla_y \varphi_2(y)\right|^{2p'}\mathrm{d}z\,\mathrm{d}t\notag\\
		&{}&+\,C\int_{\Omega_T}|x|^{2p'}\varphi_1^\ell(x)\varphi_2^{p'}(y)\varphi_3^{-\ell p^\prime/p}(t)\left|D^{\alpha}_{t|T} \varphi^\ell_3(t)\right|^{p'}\left|\Delta_y \varphi_2(y)\right|^{p'}\mathrm{d}z\,\mathrm{d}t.
	\end{eqnarray}
	Combining \eqref{weak6}, \eqref{J1}, and \eqref{J2}, we arrive at
	\begin{eqnarray}\label{weak7}
		&{}&C\,T^{-\alpha}\int_{\Omega}u_0(x,y)\left(\varphi_1(x)\right)^\ell \left(\varphi_2(y)\right)^\ell\,dx\,dy+\frac{\Gamma(\alpha)}{2}\int_{\Omega_T}u^{p}(z,t)\widetilde{\varphi}(z,t)\,\mathrm{dz}\,\mathrm{dt}\notag\\
		&{}&\leq C\int_{\Omega_T}\varphi_1^\ell(x)\varphi_2^\ell(y)\varphi_3^{-\ell p^\prime/p}(t)\left|D^{1+\alpha}_{t|T} \varphi^\ell_3(t)\right|^{p'}\mathrm{d}z\,\mathrm{d}t\notag\\&&\quad+\,C\int_{\Omega_T}\varphi_2^\ell(y)\varphi_3^{-\ell p^\prime/p}(t)\left|D^{\alpha}_{t|T} \varphi^\ell_3(t)\right|^{p'}\left|\nabla_x \varphi_1(x)\right|^{2p'}\mathrm{d}z\,\mathrm{d}t\notag\\
		&{}&\quad+\,\,C\int_{\Omega_T}\varphi_2^\ell(y)\varphi_1^{p'}(x)\varphi_3^{-\ell p^\prime/p}(t)\left|D^{\alpha}_{t|T} \varphi^\ell_3(t)\right|^{p'}\left|\Delta_x \varphi_1(x)\right|^{p'}\mathrm{d}z\,\mathrm{d}t\notag\\
		&&\quad+\,C\int_{\Omega_T}|x|^{2p'}\varphi_1^\ell(x)\varphi_3^{-\ell p^\prime/p}(t)\left|D^{\alpha}_{t|T} \varphi^\ell_3(t)\right|^{p'}\left|\nabla_y \varphi_2(y)\right|^{2p'}\mathrm{d}z\,\mathrm{d}t\notag\\
		&{}&\quad+\,\,C\int_{\Omega_T}|x|^{2p'}\varphi_1^\ell(x)\varphi_2^{p'}(y)\varphi_3^{-\ell p^\prime/p}(t)\left|D^{\alpha}_{t|T} \varphi^\ell_3(t)\right|^{p'}\left|\Delta_y \varphi_2(y)\right|^{p'}\mathrm{d}z\,\mathrm{d}t.
	\end{eqnarray}
	At this stage, we introduce the scaled variables 
	\begin{equation}\label{changevar}
		\tau_1=R^{-1/2}x,\quad \tau_2=R^{-1}y,\quad \tau_3=T^{-1}t,	
	\end{equation} 
	we get
	\begin{eqnarray}\label{weak8}
		&&C\,T^{-\alpha}\int_{\Omega}u_0(x,y)\left(\varphi_1(x)\right)^\ell \left(\varphi_2(y)\right)^\ell\,dx\,dy+\frac{\Gamma(\alpha)}{2}\int_{\Omega_T}u^{p}(z,t)\widetilde{\varphi}(z,t)\,\mathrm{dz}\,\mathrm{dt}\notag\\
		&&\leq C\,T^{1-(1+\alpha)p'}R^{\frac{N}{2}+k}+C\,T^{1-\alpha p'}R^{-p'+\frac{N}{2}+k}.
	\end{eqnarray}
	We have to distinguish three cases:\\
	
	\noindent $\bullet$ The case $p<p_0$. We divide the proof into two steps. In the first step, we take $R=T$. From \eqref{weak8}, and using the fact that $\varphi_1(x),\varphi_2(y)\leq 1$, we conclude that
	\begin{equation}\label{weak9}
		\int_{\Omega_T}(u(z,t))^{p}\widetilde{\varphi}(z,t)\,\mathrm{dz}\,\mathrm{dt}\leq C\,T^{-\delta}+C\,T^{-\alpha}\|u_0\|_{L^1}.
	\end{equation}
	with $\delta:=(1+\alpha)p'-\frac{N}{2}-k-1$. As $p<p_0$ is equivalent to $\delta>0$, we pass to the limit in \eqref{weak9} as $T\rightarrow\infty$, and using the monotone convergence theorem together with $u_0\in L^1(\mathbb{R}^{N+k})$, we get
	$$0\leq \int_0^\infty\int_{\R^{N+k}}(u(z,t))^{p}\,\mathrm{dz}\,\mathrm{dt}\leq 0.$$
	Therefore $u\equiv 0$ almost everywhere. In the second step, coming back to \eqref{weak6} combined with $u\equiv0$ a.e., we may arrive at
	$$C\,T^{-\alpha}\int_{\Omega}u_0(x,y)\left(\varphi_1(x)\right)^\ell \left(\varphi_2(y)\right)^\ell \,dx\,dy\leq 0,$$
	that is,
	$$\int_{\Omega}u_0(x,y)\left(\varphi_1(x)\right)^\ell \left(\varphi_2(y)\right)^\ell \,dx\,dy\leq 0,\quad\hbox{for all}\,\,T>0.$$
	Passing now to the limit when $T\rightarrow\infty$, and using Lebesgue dominated convergence theorem together with $u_0\in L^1(\mathbb{R}^{N+k})$, we get
	\[
	0<\int_{\R^{N+k}}u_0(x,y)\,dx\,dy=\lim_{T\rightarrow\infty}\int_{\Omega}u_0(x,y)\left(\varphi_{1,T}(x)\right)^\ell \left(\varphi_{2,T}(y)\right)^\ell\,dx\,dy\leq 0 ,
	\]
	a contradiction.\\
	
	\noindent $\bullet$ The case $p=p_0$. We can see from \eqref{weak9} with $T\rightarrow\infty$ and taking into account the fact that $p=p_0$ is equivalent to $\delta=0$, that
	\begin{equation}\label{regularityA}
		u\in L^p((0,\infty),L^p(\mathbb{R}^{N+k})).
	\end{equation}
	On the other hand, by repeating the same calculation performed after \eqref{weak6}, by using H\"older's inequality instead of Young's inequality in $J_2$, we arrive at
	\begin{eqnarray*}
		&{}&C\,T^{-\alpha}\int_{\Omega}u_0(x,y)\left(\varphi_1(x)\right)^\ell \left(\varphi_2(y)\right)^\ell \,dx\,dy+\Gamma(\alpha)\int_{\Omega_T}(u(z,t))^{p}\widetilde{\varphi}(z,t)\,\mathrm{dz}\,\mathrm{dt}\\
		&{}&\leq \frac{\Gamma(\alpha)}{4} \int_{\Omega_T}(u(z,t))^{p}\widetilde{\varphi}(z,t)\,\mathrm{dz}\,\mathrm{dt}+C\int_{\Omega_T}\varphi_1^\ell(x)\varphi_2^\ell(y)\varphi_3^{-\ell p^\prime/p}(t)\left|D^{1+\alpha}_{t|T} \varphi^\ell_3(t)\right|^{p'}\mathrm{d}z\,\mathrm{d}t\\
		&{}&+\left(\int_{\Sigma_T}(u(z,t))^{p}\widetilde{\varphi}(z,t)\,\mathrm{dz}\,\mathrm{dt}\right)^{1/p} \notag\\
		&&\times \left[ \left(\int_{\Omega_T}\varphi_2^\ell(y)\varphi_3^{-\ell p^\prime/p}(t)\left|D^{\alpha}_{t|T} \varphi^\ell_3(t)\right|^{p'}\left|\nabla_x \varphi_1(x)\right|^{2p'}\mathrm{d}z\,\mathrm{d}t\right)^{1/p'} \right.\notag\\
		&& \quad + \left(\int_{\Omega_T}\varphi_2^\ell(y)\varphi_1^{p'}(x)\varphi_3^{-\ell p^\prime/p}(t)\left|D^{\alpha}_{t|T} \varphi^\ell_3(t)\right|^{p'}\left|\Delta_x \varphi_1(x)\right|^{p'}\mathrm{d}z\,\mathrm{d}t\right)^{1/p'}\\
		&& \quad + \left(\int_{\Omega_T}|x|^{2p'}\varphi_1^\ell(x)\varphi_3^{-\ell p^\prime/p}(t)\left|D^{\alpha}_{t|T} \varphi^\ell_3(t)\right|^{p'}\left|\nabla_y \varphi_2(y)\right|^{2p'}\mathrm{d}z\,\mathrm{d}t\right)^{1/p'}\notag\\
		&& \quad + \left. \left(\int_{\Omega_T}|x|^{2p'}\varphi_1^\ell(x)\varphi_2^{p'}(y)\varphi_3^{-\ell p^\prime/p}(t)\left|D^{\alpha}_{t|T} \varphi^\ell_3(t)\right|^{p'}\left|\Delta_y \varphi_2(y)\right|^{p'}\mathrm{d}z\,\mathrm{d}t\right)^{1/p'} \right]
	\end{eqnarray*}
	where
	$$\Sigma_T:=[0,T]\times\Sigma\;\;\text{for}\;\; \Sigma=\left\{ z=(x,y)\in{\mathbb{R}}^{N+k}\hspace{2
		mm};\hspace{2mm}R^{1/2}\leq |x|\leq 2 R^{1/2},\,R\leq |y|\leq 2 R\right\}.$$
	Let $R=TK^{-1}$ with some constants $K\ge 1$ with $K<T$ such that $T$ and $K$ cannot go to infinity simultaneously. Using \eqref{changevar} and taking into account the fact that $p=p_c$, we get
	\begin{eqnarray*}
		&{}&\frac{3\Gamma(\alpha)}{4}\int_{\Omega_T}(u(z,t))^{p}\widetilde{\varphi}(z,t)\,\mathrm{dz}\,\mathrm{dt}\\
		&{}&\leq C\,T^{-\alpha}\|u_0\|_{L^1}+CK^{-\frac{N}{2}-k} +\,C\,K^{1-\frac{N}{2p'}-\frac{k}{p'}}\left(\int_{\Sigma_T}(u(z,t))^{p}\widetilde{\varphi}(z,t)\,\mathrm{dz}\,\mathrm{dt}\right)^{1/p}
	\end{eqnarray*}
	where we have used $\varphi_1(x),\varphi_2(y)\geq 0$. Passing to the limit as $T\rightarrow\infty$, and using \eqref{regularityA}, we conclude that
	$$\int_0^\infty\int_{\R^{N+k}}(u(z,t))^{p}\,\mathrm{dz}\,\mathrm{dt}\leq CK^{-\frac{N}{2}-k}.$$
	Therefore, taking a sufficiently large $K$ we obtain the desired contradiction similarly to the first case.\\
	
	\noindent $\bullet$ The case $p<1/\gamma$. We choose in this case $R<T$ such that $T$ and $R$ cannot go to infinity simultaneously. From \eqref{weak8}, and using the fact that $\varphi_1(x),\varphi_2(y)\leq 1$, we conclude that
	\begin{equation}\label{weak10}
		\int_{\Omega_T}(u(z,t))^{p}\widetilde{\varphi}_T(z,t)\,\mathrm{dz}\,\mathrm{dt}\leq C\,T^{1-(1+\alpha)p'}R^{\frac{N}{2}+k}+C\,T^{1-\alpha p'}R^{-p'+\frac{N}{2}+k}+C\,T^{-\alpha}\|u_0\|_{L^1}.
	\end{equation}
	Here we wrote $\widetilde{\varphi}_T$ to emphasize the dependence of $T$. As $p<1/\gamma\Longleftrightarrow 1-\alpha p'<0$, we pass to the limit in \eqref{weak10} as $T\rightarrow\infty$, and using $u_0\in L^1(\mathbb{R}^{N+k})$, we get
	$$0\leq \int_0^\infty\int_{\R^{N+k}}(u(z,t))^{p}\,\mathrm{dz}\,\mathrm{dt}\leq 0.$$
	Therefore $u\equiv 0$ almost everywhere, and so the desired contradiction can be obtained similarly to the first case.
\end{proof}

\begin{proof}[Proof of Theorem \ref{T3}]

	Assume by contradiction that $u\in C([0,\infty),C_0(\R^{N+k}))$ is a global nonnegative weak solution to \eqref{eq2}. Then, \eqref{weaksolution} holds for all $T\gg1$, with $k_1=1$ and $k_2=0$. Let $T$ be large parameter in $(0,\infty)$, $\psi_T\in C^1([0,T])$ with $\psi_T(T)=0$, and
	$$\varphi(z,t)=\Theta(z)\psi_T(t),\qquad z=(x,y)\in\mathbb{R}^{N+k},\,\,t> 0,$$
	where $\Theta$ is defined  in Lemma \ref{lemma4}. Using Lemma \ref{lemma}, we have
	\begin{eqnarray*}
		&&\int_{\R^{N+k}}u_0(z)\Theta(z) \psi_T(0)\,\mathrm{dz}+\Gamma(\alpha)\int_{Q_T}I^\alpha_{0|t}(u^{p})\Theta(z)\psi_T(t)\,\mathrm{dz}\,\mathrm{dt}\nonumber\\
		&&=-\int_{Q_T}u(z,t)\Theta(z)\,\mathrm{dz}\,\partial_t\psi_T(t)\,\mathrm{dt} -\int_{Q_T}u(z,t)\psi_T(t)\Delta_{\mathcal{G}}\Theta(z)\,\mathrm{dz}\,\mathrm{dt}
	\end{eqnarray*}
	where $Q_T:=[0,T]\times \mathbb{R}^{N+k}$, that is
	\begin{eqnarray*}
		&&-\int_{Q_T}u(z,t)\Theta(z)\,\mathrm{dz}\,\partial_t\psi_T(t)\,\mathrm{dt} -\int_{\R^{N+k}}u_0(z)\Theta(z)\,\mathrm{dz}\,\psi_T(0)\nonumber\\
		&&=\Gamma(\alpha)\int_{Q_T}I^\alpha_{0|t}(u^{p})\Theta(z)\psi_T(t)\,\mathrm{dz}\,\mathrm{dt}+\int_{Q_T}u(z,t)\psi_T(t)\Delta_{\mathcal{G}}\Theta(z)\,\mathrm{dz}\,\mathrm{dt}
	\end{eqnarray*}
	Therefore, by letting $f(t):= \displaystyle\int_{\R^{N+k}}u(z,t)\Theta(z)\,\mathrm{dz}$, we arrive that
	\begin{eqnarray*}
		&&-\int_0^T(f(t)-f(0))\,\partial_t\psi_T(t)\,\mathrm{dt} \nonumber\\
		&&=\int_0^T\int_{\R^{N+k}}\left(\Gamma(\alpha)I^\alpha_{0|t}(u^{p})\Theta(z)+u(z,t)\Delta_{\mathcal{G}}\Theta(z)\right)\mathrm{dz}\,\psi_T(t)\,\mathrm{dt}
	\end{eqnarray*}	
	Due to the arbitrariness of $\psi_T$, we obtain in the sense of distributions	
	\begin{equation}\label{500}
		\frac{d}{dt}(f(t)-f(0)) =\int_{\R^{N+k}}\left(\Gamma(\alpha)I^\alpha_{0|t}(u^{p})\Theta(z)+u(z,t)\Delta_{\mathcal{G}}\Theta(z)\right)\mathrm{dz}.
	\end{equation}	
	In addition, since $(f(t)-f(0))\in C([0,T])$, and 
	$$
	\int_{\R^{N+k}}\left(\Gamma(\alpha)I^\alpha_{0|t}(u^{p})\Theta(z)+u(z,t)\Delta_{\mathcal{G}}\Theta(z)\right)\mathrm{dz}\in C([0,T]),
	$$
	it follows that the equality \eqref{500} holds for $t\in[0,T]$, that is
	$$f'(t) =\int_0^t(t-s)^{-\gamma}\int_{\R^{N+k}} u^p(z,s)\Theta(z)\,\mathrm{dz}\,ds+\int_{\R^{N+k}}u(z,t)\Delta_{\mathcal{G}}\Theta(z)\,\mathrm{dz}.
	$$
	Applying Lemma \ref{lemma4} where $c>0$ is such that $\displaystyle \int_Q\Theta(z)\,\mathrm{dz}=1$ with $\varepsilon=1/(2(N+2)+k)$, that is $\Delta_{\mathcal{G}}\Theta(z)\geq-\Theta(z)$, we conclude that
	$$f'(t)\geq -f(t)+\int_0^t(t-s)^{-\gamma}\int_{\R^{N+k}} u^p(z,s)\Theta(z)\,\mathrm{dz}\,ds.$$
	As $\displaystyle \int_{\R^{N+k}}\Theta (z)\,\mathrm{dz}=1$, we conclude by Jensen’s inequality that
	\begin{equation}\label{int3A}
		f'(t)+f(t)\geq\int_0^t(t-s)^{-\gamma}f^p(s)\,ds,\qquad\hbox{for all}\,\,t\in[0,T].
	\end{equation}
	Since $u_0\geq0$, $u_0\not\equiv0$ implies $f(0)>0$, \eqref{int3A}, $p\gamma\leq 1$, and Lemma \ref{lemma3} $\mathrm{(ii)}$ yield a contradiction. 
\end{proof}

\begin{remark} It follows from \eqref{int3A}, and Lemma \ref{lemma3} $\mathrm{(i)}$ that for any $p >1$, there exists a constant $B> 0$ such that
	if $u_0\geq0$ satisfies
	$$\int_{\R^{N+k}}u_0(x,y)e^{-\varepsilon\sqrt{1+|x|^4+|y|^2}}\,dx\,dy\geq B,$$
	with $\varepsilon=1/(2(N+2)+k)$, then there is no nonnegative global weak solution of \eqref{eq2}.
\end{remark}


\subsection{The Cauchy problem (\ref{*}) with source and memory term}
This subsection is devoted to the case $k_1=k_2=1$,
\begin{equation}\label{eq3}
	\left\{\begin{array}{ll}
		\,\, \displaystyle {u_{t}-\Delta_{\mathcal{G}}
			u =\int_0^t(t-s)^{-\gamma}|u(s)|^{p_1-1}u(s)\,ds+|u(t)|^{p_2-1}u(t),} &\displaystyle {(z,t)\in{\mathbb{R}}^{N+k}\times (0,\infty),}\\
		{}\\
		\displaystyle{u(z,0)=  u_0(z),\qquad\qquad}&\displaystyle{z=(x,y)\in {\mathbb{R}}^{N+k},}
	\end{array}
	\right.
\end{equation} 
where $0\leq \gamma<1$, and $p_1,p_2>1$.


\begin{theorem}\label{T4} Assume $0<\gamma<1$, and let $u_0\in L^1(\R^{N+k})$ be such that $\displaystyle \int_{\R^{N+k}}u_0(z)\,\mathrm{dz}>0$. If
	$$p_1\leq1+\frac{2(2-\gamma)}{N+2k-2+2\gamma}\quad\hbox{or}\quad p_2\leq1+\frac{2}{N+2k}\quad\hbox{or}\quad
	p_1<\frac{1}{\gamma},$$
	then there is no global nonnegative weak solution to \eqref{eq3}.
\end{theorem}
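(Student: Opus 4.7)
The plan is to argue by contradiction and reduce Theorem \ref{T4} to the two cases already analyzed in Theorems \ref{T1} and \ref{T2}. Since $k_1=k_2=1$, the weak formulation \eqref{weaksolution} reads
\begin{eqnarray*}
&&\int_{\R^{N+k}}u(z,\tau)\varphi(z,\tau)\,\mathrm{dz}-\int_{\R^{N+k}}u_0(z)\varphi(z,0)\,\mathrm{dz}\\
&&=\Gamma(\alpha)\int_0^T\!\!\int_{\R^{N+k}}I^\alpha_{0|t}(u^{p_1})\varphi\,\mathrm{dz}\,\mathrm{dt}
+\int_0^T\!\!\int_{\R^{N+k}}u^{p_2}\varphi\,\mathrm{dz}\,\mathrm{dt}
+\int_0^T\!\!\int_{\R^{N+k}}u\bigl(\Delta_{\mathcal{G}}\varphi+\varphi_t\bigr)\,\mathrm{dz}\,\mathrm{dt}.
\end{eqnarray*}
Because $u\ge 0$ is assumed, both nonlinear terms on the right-hand side are nonnegative, and the trick is that we may \emph{discard} whichever nonlinear term is inconvenient and still keep a valid inequality. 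Depending on which assumption of Theorem \ref{T4} is in force, we will drop the other term and copy the corresponding scheme.

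\textbf{Case $p_2\le 1+\tfrac{2}{N+2k}$.} Drop the memory term. The resulting inequality is
\begin{equation*}
\int_{\Omega}u_0\,\varphi(\cdot,0)\,\mathrm{dz}+\int_{\Omega_T}u^{p_2}\varphi\,\mathrm{dz}\,\mathrm{dt}
\le -\int_{\Omega_T}u\,\varphi_t\,\mathrm{dz}\,\mathrm{dt}-\int_{\Omega_T}u\,\Delta_{\mathcal G}\varphi\,\mathrm{dz}\,\mathrm{dt},
\end{equation*}
which is exactly the inequality produced at the beginning of the proof of Theorem \ref{T1}. I would then select the same test function
$\varphi=(\varphi_1(x))^\ell(\varphi_2(y))^\ell(\varphi_3(t))^\ell$ with $\ell=2p_2/(p_2-1)$ and the Grushin-adapted anisotropic scaling $x\sim T^{1/2}$, $y\sim T$, $t\sim T$, apply Young's inequality in the subcritical regime $p_2<p_{c_1}$ and H\"older's inequality in the critical regime $p_2=p_{c_1}$, and pass to the limit $T\to\infty$ against the hypothesis $\int u_0>0$. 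No new ingredient is needed.

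\textbf{Case $p_1\le 1+\tfrac{2(2-\gamma)}{N+2k-2+2\gamma}$ or $p_1<1/\gamma$.} Drop the pure source term instead. The inequality becomes
\begin{equation*}
\int_{\Omega}u_0\,\varphi(\cdot,0)\,\mathrm{dz}+\Gamma(\alpha)\int_{\Omega_T}I^\alpha_{0|t}(u^{p_1})\varphi\,\mathrm{dz}\,\mathrm{dt}
\le -\int_{\Omega_T}u\,\varphi_t\,\mathrm{dz}\,\mathrm{dt}-\int_{\Omega_T}u\,\Delta_{\mathcal G}\varphi\,\mathrm{dz}\,\mathrm{dt}.
\end{equation*}
This is precisely the starting point of the proof of Theorem \ref{T2}. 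I would reuse the test function
$\varphi=D^\alpha_{t|T}\bigl((\varphi_1(x))^\ell(\varphi_2(y))^\ell(\varphi_3(t))^\ell\bigr)$
with $\ell=2p_1/(p_1-1)$, transfer the fractional derivative via the integration-by-parts identity \eqref{IP} together with \eqref{Int3}--\eqref{Int4}, estimate the right-hand side by Young (resp.\ H\"older in the critical subcase) and invoke Lemma \ref{lemma1}--\ref{lemma2}. The three branches $p_1<p_0$, $p_1=p_0$ and $p_1<1/\gamma$ are treated exactly as in the proof of Theorem \ref{T2}, with the anisotropic Grushin scaling $x\sim R^{1/2}$, $y\sim R$ and the appropriate choice of $R$ versus $T$ (namely $R=T$ in the subcritical case, $R=TK^{-1}$ with $K$ large in the critical case, and $R<T$ with $T\to\infty$ first when $p_1<1/\gamma$). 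Contradiction with $\int u_0>0$ follows in each subcase.

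The proof is essentially a bookkeeping exercise: the main content has already been established in Theorems \ref{T1} and \ref{T2}, and the nonnegativity of $u$ lets us treat the two nonlinear contributions independently. The only mild subtlety is to notice that the weak formulation, once one term is discarded, still yields a valid inequality of the same form used in the proofs of Theorems \ref{T1} and \ref{T2}; no change in the test function or the scaling is required. No further critical case needs to be addressed because the hypothesis in Theorem \ref{T4} is a disjunction, so it suffices that \emph{one} of the two reductions applies.
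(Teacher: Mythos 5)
Your proposal is correct and takes essentially the same route as the paper's own proof: the paper likewise uses nonnegativity of $u$ to discard the pure-source term (noting $\int_{\Omega_T}u^{p_2}D^{\alpha}_{t|T}\widetilde{\varphi}\,\mathrm{dz}\,\mathrm{dt}\geq 0$, which holds because $D^{\alpha}_{t|T}\varphi_3^{\ell}\geq 0$ by the explicit formula of Lemma \ref{lemma1}) and reruns the Theorem \ref{T2} scheme when $p_1\leq 1+\frac{2(2-\gamma)}{N+2k-2+2\gamma}$ or $p_1<1/\gamma$, and conversely discards the memory term and reruns the Theorem \ref{T1} scheme when $p_2\leq 1+\frac{2}{N+2k}$. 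Your case split, test functions, scalings, and treatment of the critical subcases all match the paper, so no gap remains.
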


Also, we can complete the case of $p_1=1/\gamma$ with an analogous result to Theorem \ref{T3}.
\begin{theorem}\label{T5} Assume $0\leq \gamma<1$, and let $u_0\in C_0(\R^{N+k})$ be such that $u_0\geq0$ and $u_0\not\equiv0$. If $p_1\gamma\leq 1$,  then
	\eqref{eq3} admits no global nonnegative weak solution $u\in C([0,\infty),C_0(\R^{N+k}))$.
\end{theorem}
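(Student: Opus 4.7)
The plan is to adapt the proof of Theorem \ref{T3} by noting that the additional source term $|u|^{p_2-1}u$ in \eqref{eq3} contributes a nonnegative quantity that can simply be discarded from the resulting integro-differential inequality. I argue by contradiction, assuming $u\in C([0,\infty),C_0(\R^{N+k}))$ is a global nonnegative weak solution of \eqref{eq3}. Applying Lemma \ref{lemma} with the test function $\varphi(z,t)=\Theta(z)\psi_T(t)$, where $\Theta$ is given by Lemma \ref{lemma4} with $\varepsilon=1/(2(N+2)+k)$ and the normalizing constant $c>0$ chosen so that $\int_{\R^{N+k}}\Theta(z)\,\mathrm{dz}=1$, and where $\psi_T\in C^1([0,T])$ satisfies $\psi_T(T)=0$, I obtain the weak formulation with all four terms present (memory, pure source, Grushin Laplacian, and time derivative).

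Setting $f(t):=\int_{\R^{N+k}}u(z,t)\Theta(z)\,\mathrm{dz}$ and exploiting the arbitrariness of $\psi_T$, followed by the same distributional-to-pointwise argument used in the proof of Theorem \ref{T3} (which is justified by the continuity $u\in C([0,\infty),C_0(\R^{N+k}))$ together with $\Theta\in L^1(\R^{N+k})$), I would arrive at the pointwise identity
\[
f'(t)=\int_0^t(t-s)^{-\gamma}\!\!\int_{\R^{N+k}}\!\!u^{p_1}(z,s)\Theta(z)\,\mathrm{dz}\,ds+\int_{\R^{N+k}}\!\!u^{p_2}(z,t)\Theta(z)\,\mathrm{dz}+\int_{\R^{N+k}}\!\!u(z,t)\Delta_{\mathcal{G}}\Theta(z)\,\mathrm{dz},
\]
valid for every $t\in[0,T]$.

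From here, Lemma \ref{lemma4} yields $\int_{\R^{N+k}} u(z,t)\Delta_{\mathcal{G}}\Theta(z)\,\mathrm{dz}\geq -f(t)$, the nonnegative source integral $\int_{\R^{N+k}} u^{p_2}(z,t)\Theta(z)\,\mathrm{dz}\geq 0$ may be discarded, and Jensen's inequality (applicable because $\Theta$ is a probability density) gives $\int_{\R^{N+k}} u^{p_1}(z,s)\Theta(z)\,\mathrm{dz}\geq f^{p_1}(s)$. Consequently,
\[
f'(t)+f(t)\geq\int_0^t(t-s)^{-\gamma}f^{p_1}(s)\,ds,\qquad t\in[0,T].
\]
Since $u_0\geq 0$ and $u_0\not\equiv 0$ imply $f(0)>0$, Lemma \ref{lemma3}$\mathrm{(ii)}$ applied with $p=p_1$ and the assumption $p_1\gamma\leq 1$ forces $T<\infty$, contradicting the assumed global existence. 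The argument is essentially a cosmetic variant of that of Theorem \ref{T3}: the only new observation is that the extra pure-power reaction contribution is nonnegative and therefore does not obstruct the inequality. I do not anticipate substantive obstacles beyond carefully reproducing the distributional-to-pointwise promotion step that was already carried out in the proof of Theorem \ref{T3}.
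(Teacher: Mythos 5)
Your proposal is correct and matches the paper's own proof: the authors likewise repeat the argument of Theorem \ref{T3} with the extra term carried along, arrive at $f'(t)+f(t)\geq\int_0^t(t-s)^{-\gamma}f^{p_1}(s)\,ds+f^{p_2}(t)$, discard the nonnegative $f^{p_2}(t)$ contribution, and invoke Lemma \ref{lemma3}$\mathrm{(ii)}$ with $f(0)>0$ to force $T<\infty$. Your only cosmetic deviation is discarding the pure-power integral before, rather than after, applying Jensen's inequality, which changes nothing.
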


\subsubsection{Proof of Theorem \ref{T4}}
The proof combines ideas used to prove Theorems \ref{T1} and \ref{T2}. Indeed, we suppose that $u$ is a global nonnegative weak solution to \eqref{eq3}, that is,  \eqref{weaksolution} holds for all $T\gg1$, with $k_1=k_2=1$. At this stage, we have two cases to distinguish.\\

\noindent $\bullet$ If 
$$p_1\leq1+\frac{2(2-\gamma)}{N+2k-2+2\gamma}\quad\hbox{or}\quad p_1<\frac{1}{\gamma} ,$$
we denote $R$ and $T$ to be large parameters in $(0,\infty)$, and define $\varphi$ as in the proof of Theorem \ref{T2}. So, using the fact that 
$$\displaystyle \int_{\Omega_T}(u(z,t))^{p_2}D^{\alpha}_{t|T}\widetilde{\varphi}(z,t)\,\mathrm{dz}\,\mathrm{dt}\geq0,$$ 
we conclude that
\begin{eqnarray*}
	&&\int_{\Omega}u_0(x,y)\left(\varphi_1(x)\right)^\ell \left(\varphi_2(y)\right)^\ell D^\alpha_{t|T}(\varphi_3)(0)\,dx\,dy+\Gamma(\alpha)\int_{\Omega_T}I^\alpha_{0|t}((u(z,t))^{p_1})D^{\alpha}_{t|T}\widetilde{\varphi}(z,t)\,\mathrm{dz}\,\mathrm{dt}\nonumber\\
	&&\leq -\int_{\Omega_T}u(z,t)\partial_tD^{\alpha}_{t|T}\widetilde{\varphi}(z,t)\,\mathrm{dz}\,\mathrm{dt} -\int_{\Omega_T}u(z,t)\Delta_{\mathcal{G}}D^{\alpha}_{t|T}\widetilde{\varphi}(z,t)\,\mathrm{dz}\,\mathrm{dt}
\end{eqnarray*}
where
\[
\Omega_T:=[0,T]\times\Omega\;\;\text{for}\;\; \Omega=\left\{ z=(x,y)\in{\mathbb{R}}^{N+k}\hspace{2
	mm};\hspace{2mm}|x|\leq 2 R^{1/2},\,|y|\leq 2 R\right\}.
\]
The right-hand side of the above inequality is the same as \eqref{weak5}. Hence, we can apply the same calculations in the proof of Theorem \ref{T2} from inequality \eqref{weak6} until obtain the required contradiction.\\

\noindent $\bullet$ If 
$$p_2\leq1+\frac{2}{N+2k}.$$
Let us choose $\varphi$ as in the proof of Theorem \ref{T1}. Using the fact that $$\displaystyle \int_{\Omega_T}I^\alpha_{0|t}(u(z,t))^{p_1}\varphi(z,t)\,\mathrm{dz}\,\mathrm{dt}\geq0,$$ 
we conclude that
\begin{eqnarray*}
	&{}&\int_{\Omega}u_0(x,y)\left(\varphi_1(x)\right)^\ell \left(\varphi_2(y)\right)^\ell\,dx\,dy +  \int_{\Omega_T}(u(z,t))^{p_2}\varphi(z,t)\,\mathrm{dz}\,\mathrm{dt}\\
	&\leq&-\int_{\Omega_T}u(z,t)\varphi_t(z,t)\,\mathrm{dz}\,\mathrm{dt} -\int_{\Omega_T}u(z,t)\Delta_{\mathcal{G}}\varphi(z,t)\,\mathrm{dz}\,\mathrm{dt}
\end{eqnarray*}
where
\[
\Omega_T:=[0,T]\times\Omega\;\;\text{for}\;\; \Omega=\left\{ z=(x,y)\in{\mathbb{R}}^{N+k}\hspace{2
	mm};\hspace{2mm}|x|\leq 2 T^{1/2},\,|y|\leq 2 T\right\}.
\]
By applying the same calculation as in the proof of Theorem \ref{T1}, we get the required contradiction.\hfill$\square$

\subsubsection{Proof of Theorem \ref{T5}}
The proof is similar to the one of Theorem \ref{T3}. In fact, all the calculations can be repeated but carrying the integral corresponding to the term $|u(t)|^{p_2-1}u(t)$. Then, instead of \eqref{int3A}, we arrive at
\begin{equation}\label{int4A}
	f'(t)+f(t)\geq\int_0^t(t-s)^{-\gamma}f^{p_1}(s)\,ds +  f^{p_2}(t) ,\qquad\hbox{for all}\,\,t\in[0,T].
\end{equation}
Since $f^{p_2}(t)\geq0$, the proof is finished exactly as in Theorem \ref{T3}.
\hfill$\square$\\


\section{Final Remarks}

In this paper we proved the non-existence of nonnegative solutions for \eqref{*}, with $k_1,k_2\geq0$. As we already said in the Introduction, the results in the previous section combined with the results in \cite{Fino-Viana-25.1} provide the critical Fujita exponents in this case. Next, we finish the paper by providing sufficient conditions for the non-existence of solutions in the case of mixed-sign nonlinear terms: Source vs Absorption. In these cases, we do not know if the results are optimal in the sense of the Fujita critical exponents. However, they give insights and leave interesting open problems for future researches.

By assuming that $k_1=1$ and $k_2=-1$, we write
\begin{equation}\label{eq4}
	\left\{\begin{array}{ll}
		\,\, \displaystyle {u_{t}-\Delta_{\mathcal{G}}
			u =\int_0^t(t-s)^{-\gamma}|u(s)|^{p_1-1}u(s)\,ds-|u(t)|^{p_2-1}u(t),} &\displaystyle {(z,t)\in {\mathbb{R}}^{N+k}\times (0,\infty),}\\
		{}\\
		\displaystyle{u(z,0)=  u_0(z),\qquad\qquad}&\displaystyle{z\in {\mathbb{R}}^{N+k}.}
	\end{array}
	\right.
\end{equation}

By using either the idea in the proof of Theorem 2.3 in \cite{Chen-Fino-23} or in the proof of Theorem 2.3 in \cite{K-Rei-21}, it can shown the following: if $u_0\in L^1(\R^{N+k})$ is such that $\displaystyle \int_{\R^{N+k}}u_0(z)\,\mathrm{dz}>0$, $p_1\gamma\leq 1$, or
$$\left\{
\begin{array}{ll}
	p_1<\frac{\min\left\{p_2;\,\frac{\frac{N}{2}+k+\gamma}{\frac{N}{2}+k-1+\gamma}\right\}\left[1-(1-\gamma)\left(\frac{N}{2}+k\right)\right]_++\min\left\{(1-\gamma)\left(\frac{N}{2}+k\right),1\right\}}{\gamma}&\,\,\hbox{when}\quad p_1>p_2\\\\
	p_1\leq 1+\frac{2(2-\gamma)}{N+2k-2+2\gamma}&\,\,\hbox{when}\quad p_1\leq p_2\\
\end{array}
\right.
$$
then there is no global nonnegative weak solution to \eqref{eq4}. 

In the above result, when $p_1\leq p_2$, we need $u_0\in L^1(\R^{N+k})\cap C_0(\R^{N+k})$. Moreover, in the case $p_1\gamma=1$ with $p_1<p_2$, we can follow the same approach as in Theorem 1.1 (i)-(a) in \cite{Zh-23}.


Next, consider $k_1<0$ and $k_2>0$. Without loss of generality, we may assume that $k_1=-1$ and $k_2=1$. Then, we have
\begin{equation}\label{eq5}
	\left\{\begin{array}{ll}
		\,\, \displaystyle {u_{t}-\Delta_{\mathcal{G}}
			u =|u(t)|^{p_2-1}u(t)-\int_0^t(t-s)^{-\gamma}|u(s)|^{p_1-1}u(s)\,ds,} &\displaystyle {(z,t)\in {\mathbb{R}}^{N+k}\times (0,\infty),}\\
		{}\\
		\displaystyle{u(z,0)=  u_0(z),\qquad\qquad}&\displaystyle{z\in {\mathbb{R}}^{N+k}.}
	\end{array}
	\right.
\end{equation} 


In this case, it can be proved that if $u_0\in L^1(\R^{N+k})\cap L^\infty(\R^{N+k})$ is such that $\displaystyle \int_{\R^{N+k}}u_0(z)\,\mathrm{dz}>0$, $p_2\geq p_1$ and
$$p_1\leq 1+\frac{2(2-\gamma)}{N+2k-2+2\gamma}\,\,\,\hbox{or}\,\,\,p_1<\frac{1}{\gamma}
$$
then there is no global nonnegative weak solution to \eqref{eq5}.


\section*{Acknowledgments}

A. Fino is supported by the Research Group Unit, College of Engineering and Technology, American University of the Middle East. A. Viana is partially CAPES-Humboldt Research Fellowship Programme for Experienced Researchers PE32739979 and is part of Universal Fapitec Project n$^{\circ}$ 019203.01303/2024-1. Part of this research was carried out while A. Viana was visiting the Institut f\"ur Angewandte Mathematik, Universit\"at Ulm. He would like to thank the IAA for the hospitality.

\bibliographystyle{abbrv}
\bibliography{ref}

\end{document}